 \newtheorem{thm}{Theorem}[section]
 \newtheorem{lem}[thm]{Lemma}
 \theoremstyle{definition}
 \theoremstyle{remark}
 \newtheorem{ex}{Example}
 \numberwithin{equation}{section}
\begin{document}
%
%
%
%
%
%
%
%
%
\title
 {The Realizable Extension Problem and the Weighted Graph $(K_{3,3},l)$}
\author{Jonathan McLaughlin}

\address{%
School of Mathematics\\
National University of Ireland, Galway\\
University Road, Galway\\
Ireland}

\email{\tt{j.mclaughlin2@nuigalway.ie}}

\subjclass{Primary 55R80; Secondary 51-XX}

\keywords{Moduli spaces, realizations, weighted graphs, configuration space}

\date{June 12, 2010}
\dedicatory{To my parents John and Colette}

\begin{abstract} This note outlines the realizable extension problem for weighted graphs and provides results of a detailed analysis of this problem for the weighted graph $(K_{3,3},l)$. This analysis is then utilized to provide a result relating to the connectedness of the moduli space of planar realizations of $(K_{3,3},l)$. The note culminates with two examples which show that in general, realizability and connectedness results relating to the moduli spaces of weighted cycles which are contained in a larger weighted graph cannot be extended to similar results regarding the moduli space of the larger weighted graph.
\end{abstract}
\maketitle

\parindent=0cm

\section{Introduction}
Given a graph with preassigned edge lengths then a common problem is to determine if this weighted graph can be realized in $\mathbb{E}^{2}$. A \textit{graph} $G$ is a pair $(V_{G},E_{G})$ where $V_{G}$, known as the \textit{vertex set} of $G$, is a finite set, and $E_{G}$, known as the \textit{edge set} of $G$, is a multiset whose elements are elements of $[V_{G}]^{2}$, the set of $2$-element subsets of $V_{G}$. Each edge $\{i,j\}$ is denoted $ij$ in the sequel. In this note, graphs can have parallel edges but not loops. For further detail regarding graph theory, see \cite{D}. A length function on a graph $G$ is a function $l:E_{G}
\rightarrow \mathbb{R}^{\geq 0}$. A weighted graph is a pair $(G,l)$ where $G$ is a graph and $l$ is a length function on $E_{G}$. Given a weighted graph $(G,l)$, then the \textit{configuration space} $C(G,l)$ of $(G,l)$ is defined as  \[ C(G,l) = \{p:V_{G} \rightarrow
\mathbb{E}^2 \;\mid \; d(p(u),p(v)) = l(uv) \text{ for all } uv \in E_{G}\}\]

Each $p$ contained in $C(G,l)$ is called a \textit{realization} of $(G,l)$ and if there exists a realization of $(G,l)$, then the weighted graph $(G,l)$ is said to be \textit{realizable}. Note that in the sequel, and particularly in figures, given a realization $p$ then $p_{\mid v_{i}}$ is denoted $p_{i}$. Given a graph $G$ with vertex set $V_{G}$ then the group $\mathbb{E}^{+}(2)$ of \textit{orientation preserving isometries} of $\mathbb{E}^{2}$ acts on $C(G,l)$ by
\begin{center}
$(\mathbf{g}.p)(v)= \mathbf{g}.(p(v))$ for all $v\in V_{G}$ 
\end{center} 
Given a weighted graph $(G,l)$ and the configuration space $C(G,l)$, then the \textit{moduli space} $M(G,l)$ of $(G,l)$ is the quotient space 
\begin{center}
$M(G,l)=C(G,l)/\mathbb{E}^{+}(2)$
\end{center}

Elements of a moduli space $M(G,l)$ are equivalence classes and so are usually  denoted by $[p]$, however, whenever no confusion can arise, by a slight abuse of notation, the elements of $M(G,l)$ are simply denoted $p$ in the sequel.\\

A subspace of a configuration space which is utilized in the sequel is now described. Given a weighted graph $(G,l)$, the vertices $a$ and $b$ in $V_{G}$ such that $ab \in E_G$ and that $l(ab)>0$, then define   \[C_{a,b}(G,l) = \{ p \in C(G,l)\;\mid\; p(a) = (0,0)\;{\rm and}\; p(b) = (l(ab),0)\}\]

Note that $C_{a,b}(G,l)$ and $C_{b,a}(G,l)$ are different as sets but are homeomorphic topological spaces. Observe that given a weighted graph $(G,l)$ then the space $C_{a,b}(G,l)$ is homeomorphic to the moduli space $M(G,l)$.\\

The \textit{realizability problem} for a weighted graph is the problem of  establishing whether or not there exists a realization of $(G,l)$ and, in general, this problem is hard. Note that this problem is sometimes referred to as the \textit{molecule problem} and for further details on this see \cite{BC} and \cite{He}. One of the simplest weighted graphs for which the realizability problem has been solved is $(K^{4},k)$, where $K^{4}$ is the complete graph on four vertices and this solution is now briefly outlined. Consider $(K^{4},k)$, with vertex set  $V_{K^{4}}=\{v_{1},v_{2},v_{3},v_{4}\}$ and edge set $E_{K^{4}}=\{v_{1}v_{2},v_{1}v_{3},v_{1}v_{4},v_{2}v_{3},v_{2}v_{4}, v_{3}v_{4}\}$. It is assumed throughout this section that the lengths assigned by $k$ are denoted as follows $ k(v_{1}v_{2})=a$, $ k(v_{2}v_{4})=b$, $ k(v_{3}v_{4})=c$, $ k(v_{1}v_{3})=d$, $ k(v_{2}v_{3})=\alpha$ and $ k(v_{1}v_{4})=\beta$. This notation is illustrated in Fig. \ref{K1}. \\

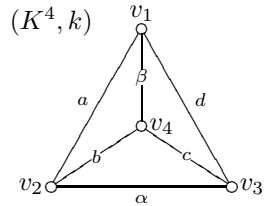
\begin{figure}[b]
\begin{center}
$\begin{xy}
 \POS (12,21) *\cir<2pt>{} ="a1" *+!D{v_{1}} ,
 (0,0) *\cir<2pt>{} ="b1" *+!R{v_{2}},
 (24,0) *\cir<2pt>{} ="c1" *+!L{v_{3}},
 (12,8) *\cir<2pt>{} ="d1" *+!L{v_{4}},
 (0,21) *+!{(K^{4},k)}
 
\POS "a1" \ar@{-}_{a}  "b1",
\POS "b1" \ar@{-}_{\alpha}  "c1",
\POS "c1" \ar@{-}_{d}  "a1",
\POS "d1" \ar@{-}|{\beta}  "a1",
\POS "d1" \ar@{-}|{b}  "b1",
\POS "d1" \ar@{-}|{c}  "c1",

 \end{xy}$ 
 \caption{The weighted graph $(K^{4},k)$ }
\label{K1}
\end{center}
\end{figure}

It is well known, see \cite{AS} for instance, that $(K^{4},k)$ is realizable if and only if all cyclic permutations of the four inequalities $a\leq b+\beta$, $b\leq c+\alpha$, $c\leq d+\beta$ and $d\leq a+\alpha$ are satisfied and equation \ref{eqn1} holds. Note that the determinant contained in equation \ref{eqn1} is known as the {\it Cayley-Menger determinant.}
\begin{equation}
\label{eqn1}
\det \left(\begin{array}{cccccc}
0 & 1 & 1 & 1 &  1 \\
1 & 0 &  a^{2} &  d^{2} &   \beta^{2} \\
1 &  a^{2} & 0 &  \alpha^{2} &   b^{2} \\
1 & d^{2} &  \alpha^{2} & 0 &   c^{2} \\
1 &  \beta^{2} &  b^{2} &  c^{2} & 0
\end{array}\right) =\;0
\end{equation}

The fact that realizability conditions exist for the weighted graph $(K^{4},k)$ appears to be something of a rarity as there does not appear to exist in the literature general realizability conditions, analogous to the $(K^{4},k)$ case for other (non-trivial) weighted graphs. However, one recent development to this end, is a result contained in \cite{JML} (and will appear in \cite{JC}) which gives realizability conditions for weighted graphs where the graph is contained in the class of series-parallel graphs. \\

At this point the focus switches from the \textit{realizability problem} to the following, more tractable, \textit{realizable extension problem}. Given a realizable weighted graph $(H,h)$ where $H\subset G$, then what conditions must an extension of $h$, denoted $l$, satisfy so that $(G,l)$ is realizable. Observe that as every graph has a spanning tree (or spanning forest if the graph is not connected) then it is possible to state the following elementary existence result for such extensions.

\begin{lem}\label{exten2} Given a graph $G$ and a realizable weighted graph $(H,h)$ where $H\subset G$, then it is possible to find an extension of $h$, denoted $l$, such that $(G,l)$ is realizable.
\end{lem}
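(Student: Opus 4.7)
The plan is to take a realization of $(H,h)$ and physically extend it to a map $V_G \to \mathbb{E}^2$, then read off the required edge lengths from the resulting picture. The role of the spanning tree/forest is just to guarantee that the new vertices in $V_G \setminus V_H$ can be placed coherently.

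More precisely, first I would fix any realization $p_H \in C(H,h)$, which exists by the hypothesis that $(H,h)$ is realizable. Then I would choose a spanning forest $F$ of $G$ with the property that $F$ restricted to $V_H$ is a spanning forest of $H$; such an $F$ can be built by starting from any spanning forest of $H$ and adding edges of $G$ greedily until a spanning forest of $G$ is obtained. This arranges the vertices of $V_G \setminus V_H$ into a sequence $u_1, u_2, \dots, u_k$ such that each $u_i$ is joined by an edge of $F$ either to some vertex of $V_H$ or to some $u_j$ with $j<i$.

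Next I would extend $p_H$ to a map $p: V_G \to \mathbb{E}^2$ by placing the $u_i$ one at a time, using the ordering above. For each $u_i$, pick any point of $\mathbb{E}^2$ for $p(u_i)$ (for instance, any point distinct from the images of all previously placed vertices, to keep things tidy). This is possible because $\mathbb{E}^2$ is infinite, so no obstruction arises. Finally, define the extension $l$ of $h$ by
\[
l(e) = \begin{cases} h(e) & \text{if } e \in E_H, \\ d(p(u),p(v)) & \text{if } e = uv \in E_G \setminus E_H. \end{cases}
\]
By construction, $p \in C(G,l)$, so $(G,l)$ is realizable.

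There is really no obstacle here: the statement is essentially a definitional unwinding once one notices that any planar embedding of the remaining vertices automatically induces valid edge lengths. The only mild bookkeeping is choosing the spanning forest so that it is compatible with $H$, which is why it is mentioned in the preamble to the lemma.
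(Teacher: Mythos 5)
Your proof is correct and matches the paper's intent: the paper states this lemma with only a one-line remark about spanning trees/forests and supplies no further argument, and your construction --- extend a realization of $(H,h)$ to all of $V_G$ by placing the vertices of $V_G\setminus V_H$ at arbitrary points of $\mathbb{E}^2$ and define $l$ on $E_G\setminus E_H$ by reading off the resulting distances --- is exactly the natural completion of that remark. The only comment is that once you allow the new vertices to be placed at arbitrary points, the spanning forest and the ordering $u_1,\dots,u_k$ do no actual work and could be omitted entirely.
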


\section{The Realizable Extension Problem for $(K_{3,3},l)$}

The realizable extension problem is now examined in the case of the weighted graph $(K_{3,3},l)$. The reason for choosing $(K_{3,3},l)$ is that this graph is essentially the simplest graph for which the realizable extension problem is non-trivial. With the exception of $K^{4}$, for which the realizable extension problem is essentially trivial, all graphs smaller than $K_{3,3}$ are series-parallel and so the realizability problem and hence, the realizable extension problem, can be solved using the results of \cite{JML}.\\

Consider the weighted complete bi-partite graph $(K_{3,3}, l)$, where $V_{K_{3,3}}=\{v_{1},v_{2},v_{3},$ $v_{4},v_{5},v_{6}\}$ and $E_{K_{3,3}}=\{v_{6}v_{1},v_{1}v_{2},v_{2}v_{3},v_{3}v_{4},v_{4}v_{5},v_{5}v_{6},v_{1}v_{4}, v_{2}v_{5}$, $v_{3}v_{6}\}$. It is assumed throughout this section that the lengths assigned by $l:E_{K_{3,3}}\to \mathbb{R}^{\geq 0}$ are denoted $ l(v_{1}v_{6})=a$, $ l(v_{1}v_{2})=b$, $ l(v_{2}v_{3})=c$, $ l(v_{3}v_{4})=d$, $ l(v_{4}v_{5})=e$, $ l(v_{5}v_{6})=f$, $ l(v_{1}v_{4})=\alpha$, $ l(v_{3}v_{6})=\beta$ and $ l(v_{2}v_{5})=\gamma$. The values $a,b,...,\gamma$ are not assumed to be fixed at this stage. This notation is illustrated in Fig. \ref{K5}. \\

\begin{figure}[t]
\begin{center}
\scalebox{1}{$\begin{xy}
 \POS (28,12) *\cir<2pt>{} ="a1" ,
 (20,0) *\cir<2pt>{} ="b1" ,
 (8,0) *\cir<2pt>{} ="c1" ,
 (0,12) *\cir<2pt>{} ="d1" ,
 (8,24) *\cir<2pt>{} ="e1" ,
 (20,24) *\cir<2pt>{} ="f1" ,
 (-10,24)*+!{(K_{3,3}, l)},
 (28,12) *+!L{v_{1}},
 (20,0) *+!UL{v_{2}},
 (8,0) *+!UR{v_{3}},
 (0,12)*+!R{v_{4}},
 (8,23) *+!DR{v_{5}},
 (20,23) *+!DL{v_{6}},

\POS "a1" \ar@{-}^{b}  "b1",
\POS "b1" \ar@{-}^{c}  "c1",
\POS "c1" \ar@{-}^{d}  "d1",
\POS "d1" \ar@{-}^{e}  "e1",
\POS "e1" \ar@{-}^{f}  "f1",
\POS "f1" \ar@{-}^{a}  "a1",
\POS "a1" \ar@{-}_>>>>>{\alpha}  "d1",
\POS "b1" \ar@{-}^<<<<<{\gamma}  "e1",
\POS "c1" \ar@{-}^<<<<<{\beta}  "f1",

 \end{xy}$}
\caption{The weighted graph $(K_{3,3}, l)$}
\label{K5}
\end{center}
\end{figure}
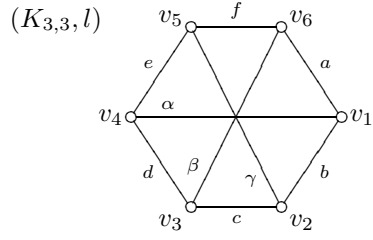

Consider also the four specific subgraphs of $K_{3,3}$ which are defined as  $G_{3}=(V_{K_{3,3}},E_{K_{3,3}}\setminus v_{2}v_{5})$, $G_{2}=(V_{K_{3,3}},E_{G_{3}}\setminus v_{3}v_{6})$, $G_{1}=(V_{K_{3,3}},E_{G_{2}}\setminus v_{1}v_{4})$ and $G_{0}=(V_{K_{3,3}},E_{G_{1}}\setminus v_{5}v_{6})$ which is a path. The former three of the aforementioned subgraphs of $K_{3,3}$ are shown in Fig. \ref{K5b}. \\

\begin{figure}[t]
\begin{center}
\scalebox{1}{$\begin{xy}
 \POS (28,12) *\cir<2pt>{} ="a1" ,
 (20,0) *\cir<2pt>{} ="b1" ,
 (8,0) *\cir<2pt>{} ="c1" ,
 (0,12) *\cir<2pt>{} ="d1" ,
 (8,24) *\cir<2pt>{} ="e1" ,
 (20,24) *\cir<2pt>{} ="f1" ,
 (28,12) *+!L{v_{1}},
 (20,0) *+!UL{v_{2}},
 (8,0) *+!UR{v_{3}},
 (0,12)*+!R{v_{4}},
 (8,23) *+!DR{v_{5}},
 (20,23) *+!DL{v_{6}},
(0,30) *+!{(G_{1}, l_{ G_{1}})},

 \POS "a1" \ar@{-}^{b}  "b1",
\POS "b1" \ar@{-}^{c}  "c1",
\POS "c1" \ar@{-}^{d}  "d1",
\POS "d1" \ar@{-}^{e}  "e1",
\POS "e1" \ar@{-}^{f}  "f1",
\POS "f1" \ar@{-}^{a}  "a1",
\end{xy}$}\scalebox{1}{$\begin{xy}
 \POS (28,12) *\cir<2pt>{} ="a1" ,
 (20,0) *\cir<2pt>{} ="b1" ,
 (8,0) *\cir<2pt>{} ="c1" ,
 (0,12) *\cir<2pt>{} ="d1" ,
 (8,24) *\cir<2pt>{} ="e1" ,
 (20,24) *\cir<2pt>{} ="f1" ,
 (28,12) *+!L{v_{1}},
 (20,0) *+!UL{v_{2}},
 (8,0) *+!UR{v_{3}},
 (0,12)*+!R{v_{4}},
 (8,23) *+!DR{v_{5}},
 (20,23) *+!DL{v_{6}},
(0,30) *+!{(G_{2}, l_{ G_{2}})},

 \POS "a1" \ar@{-}^{b}  "b1",
\POS "b1" \ar@{-}^{c}  "c1",
\POS "c1" \ar@{-}^{d}  "d1",
\POS "d1" \ar@{-}^{e}  "e1",
\POS "e1" \ar@{-}^{f}  "f1",
\POS "f1" \ar@{-}^{a}  "a1",
\POS "a1" \ar@{-}_>>>>>>>>{\alpha}  "d1",

 \end{xy}$}$\begin{xy}
 \POS (28,12) *\cir<2pt>{} ="a1" ,
 (20,0) *\cir<2pt>{} ="b1" ,
 (8,0) *\cir<2pt>{} ="c1" ,
 (0,12) *\cir<2pt>{} ="d1" ,
 (8,24) *\cir<2pt>{} ="e1" ,
 (20,24) *\cir<2pt>{} ="f1" ,
 (28,12) *+!L{v_{1}},
 (20,0) *+!UL{v_{2}},
 (8,0) *+!UR{v_{3}},
 (0,12)*+!R{v_{4}},
 (8,23) *+!DR{v_{5}},
 (20,23) *+!DL{v_{6}},
(0,30) *+!{(G_{3}, l_{ G_{3}})},

 \POS "a1" \ar@{-}^{b}  "b1",
\POS "b1" \ar@{-}^{c}  "c1",
\POS "c1" \ar@{-}^{d}  "d1",
\POS "d1" \ar@{-}^{e}  "e1",
\POS "e1" \ar@{-}^{f}  "f1",
\POS "f1" \ar@{-}^{a}  "a1",
\POS "a1" \ar@{-}_>>>>>>>{\alpha}  "d1",
\POS "f1" \ar@{-}_>>>>>>>{\beta}  "c1",

\end{xy}$

\caption{The weighted graphs $(G_{1}, l_{ G_{1}})$, $(G_{2}, l_{ G_{2}})$ and $(G_{3}, l_{ G_{3}})$ }
\label{K5b}
\end{center}
\end{figure}
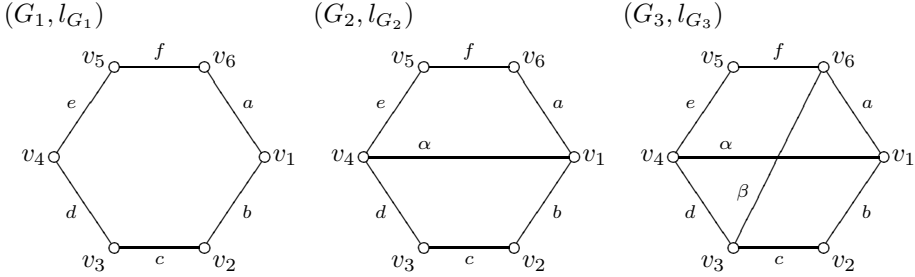

Assuming that $l_{G_{0}}$ is given, thus fixing the edge lengths $a,b,c,d$ and $e$, then determining conditions which the extensions $l_{ G_{1}}, l_{ G_{2}}, l_{ G_{3}}$ and $ l$ must satisfy so that $(G_{1}, l_{ G_{1}})$, $(G_{2}, l_{ G_{2}})$, $(G_{3}, l_{ G_{3}})$ and $(K_{3,3}, l)$, respectively, are realizable, is the focus of the remainder of this section.

\begin{lem}\label{lemmG0}
Given a weighted graph $(G_{0}, l_{ G_{0}})$, as above, then $(G_{1}, l_{ G_{1}})$ is realizable if and only if $ l_{ G_{1}}$ assigns a value for $f$ such that
\begin{center}
 $f\in [\max\{0,2.\max\{a,b,c,d,e\}-(a+b+c+d+e)\},a+b+c+d+e]$ 
\end{center}
\end{lem}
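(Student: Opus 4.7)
The plan is to recognize $G_1$ as a $6$-cycle obtained from the path $G_0$, viewed with vertex sequence $v_6, v_1, v_2, v_3, v_4, v_5$ and corresponding edge lengths $a,b,c,d,e$, by adjoining the closing edge $v_5 v_6$ of length $f$. Since $G_0$ is a tree, it is realizable for any length function, so $(G_1, l_{G_1})$ is realizable if and only if there exists a realization $p$ of $(G_0, l_{G_0})$ with $d(p_5, p_6) = f$. The lemma therefore reduces to computing the set $D \subseteq \mathbb{R}^{\geq 0}$ of values taken by $d(p_5, p_6)$ as $p$ ranges over realizations of $(G_0, l_{G_0})$, and identifying $D$ with the stated interval.

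For the necessity of the bounds, I would iterate the triangle inequality along the path to obtain $d(p_5, p_6) \leq a+b+c+d+e$. For the lower bound, note that if $d(p_5, p_6) = f$ then the six vertices close into a hexagonal linkage with consecutive sides $a,b,c,d,e,f$, and for each side of a closed polygon the length is bounded above by the sum of the others; applied to each $x \in \{a,b,c,d,e\}$ this gives $x \leq f + \bigl((a+b+c+d+e) - x\bigr)$, which rearranges to $f \geq 2x - (a+b+c+d+e)$. Combined with $f \geq 0$ this is exactly the stated lower bound.

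Sufficiency will come from a continuity argument. The space of realizations of $(G_0, l_{G_0})$ modulo rigid motions is path-connected, since each of the four interior vertices of the path contributes an independent rotational degree of freedom, and $d(p_5, p_6)$ is a continuous function on this space, so by the intermediate value theorem it suffices to exhibit configurations attaining the two extremes of the candidate interval. The upper extreme $a+b+c+d+e$ is attained by the collinear configuration placing $p_6, p_1, p_2, p_3, p_4, p_5$ in order along a line. The lower extreme is attained by a collinear folded configuration (longest edge on a positive ray, the remaining four edges stacked in the opposite direction) when $2\max\{a,b,c,d,e\} > a+b+c+d+e$, and by a genuine closed $5$-gon with $p_5 = p_6$ otherwise.

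The main obstacle is precisely this last step: producing the closed $5$-gon in the regime $2\max\{a,b,c,d,e\} \leq a+b+c+d+e$. This is the classical polygon realizability theorem for a $5$-cycle, which I would handle by induction on the number of sides, repeatedly collapsing two adjacent sides of lengths $\ell, \ell'$ into a single effective side whose length may be chosen freely in $[|\ell - \ell'|, \ell + \ell']$ while keeping the reduced polygon inequality satisfied, until the problem bottoms out at the triangle case where the condition is the ordinary triangle inequality.
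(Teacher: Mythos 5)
Your proposal is correct, and it reaches the result by a more self-contained route than the paper. Both arguments rest on the same reduction: $(G_1,l_{G_1})$ is realizable iff the closed hexagon inequalities hold, and the six inequalities $x\leq(\text{sum of the other five})$ are algebraically equivalent to $f$ lying in the stated interval. The paper simply cites the hexagon realizability criterion as well known and stops there. You instead prove it: you view the problem as computing the range of the end-to-end distance $d(p_5,p_6)$ on the configuration space of the open chain $(G_0,l_{G_0})$, observe that this space is connected (a quotient of a torus of edge directions) so the range is an interval by the intermediate value theorem, pin down the upper endpoint with the straight collinear configuration and the lower endpoint with either the folded collinear configuration (when $2\max>a+b+c+d+e$) or a closed pentagon with $p_5=p_6$ (otherwise), the latter supplied by the classical polygon realizability theorem proved by induction on the number of sides. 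All steps check out — in particular the folded configuration does give distance $2\max-(a+b+c+d+e)$ regardless of where the longest edge sits in the path, and the induction step (choosing the effective side length as the minimum of $\ell_{n-1}+\ell_n$ and $\ell_1+\cdots+\ell_{n-2}$) does preserve the reduced polygon inequalities. What your approach buys is a complete proof with no external citation, plus the connectedness/IVT mechanism, which is exactly the tool the paper reuses later (Lemma 2.4 and the workspace analysis); what the paper's approach buys is brevity. The only cosmetic quibble is that the "four independent rotational degrees of freedom" are more naturally attached to the five edge directions modulo one overall rotation than to the interior vertices, but the connectedness claim you need is correct either way.
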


\begin{proof} As $G_{0}$ is a path then $(G_{0}, l_{ G_{0}})$ is always realizable. The graph  $G_{1}=(V_{G_{0}},E_{G_{0}}\cup v_{5}v_{6})$ is a cycle, and so $(G_{1},l_{G_{1}})$ is realizable if and only if the inequality $ f \leq a+b+c+d+e$, and all of the other five cyclic permutations of this inequality, are satisfied. Choosing $f\in [\max\{0,2.\max\{a,b,c,d,e\}-(a+b+c+d+e)\},a+b+c+d+e]$ ensures all six inequalities are satisfied.   
\end{proof}


\begin{lem}
Given a realizable weighted graph $(G_{1}, l_{ G_{1}})$, as above, and letting  $\mu_{1}=2.\max\{a,e,f\}$ and $\mu_{2}=2.\max\{b,c,d\}$, then $(G_{2}, l_{ G_{2}})$ is realizable if and only if $ l_{ G_{2}}$ assigns a value for $\alpha$ such that
\begin{center}
 $\alpha\in [\max\{0,\mu_{1}-(a+e+f)\},a+e+f]\cap [\max\{0,\mu_{2}-(b+c+d)\},b+c+d]$ 
\end{center}

\end{lem}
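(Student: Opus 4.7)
The plan is to exploit the fact that adjoining the edge $v_1 v_4$ to the cycle $G_1$ decomposes $G_2$ into two edge-disjoint $4$-cycles $C = v_1 v_2 v_3 v_4 v_1$ and $C' = v_1 v_6 v_5 v_4 v_1$ sharing exactly the edge $v_1 v_4$. First I would observe that $(G_2, l_{G_2})$ is realizable if and only if both weighted $4$-cycles $(C, l_{G_2}|_C)$ and $(C', l_{G_2}|_{C'})$ are realizable. The forward direction is immediate by restricting any realization of $G_2$ to the vertex set of each subcycle. For the reverse direction, given separate realizations of $C$ and $C'$ with the same value of $\alpha$, the shared edge has the same length in both, so I can apply an isometry of $\mathbb{E}^2$ (orientation-preserving after a possible reflection of $C'$, which preserves all edge lengths) to align the images of $v_1$ and $v_4$. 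Since $C$ and $C'$ share no other vertex and no additional edges of $K_{3,3}$ join $\{v_2,v_3\}$ to $\{v_5,v_6\}$, the combined vertex assignment is a realization of $(G_2, l_{G_2})$.

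Next I would invoke the standard realizability criterion for a weighted $n$-cycle in $\mathbb{E}^2$, namely that it is realizable if and only if the length of the longest edge is at most the sum of the remaining edge lengths; this is essentially the content of Lemma \ref{lemmG0} applied with $n=4$, and can be established by an analogous polygon-inequality argument. Applied to $C'$ with side lengths $a,\alpha,e,f$, this criterion splits into two cases depending on whether $\alpha$ is the longest side. If $\alpha$ is longest, the bound reads $\alpha \le a+e+f$; otherwise the bound reads $\max\{a,e,f\} \le \alpha + (\text{sum of the other two of } a,e,f)$, equivalently $\alpha \ge 2\max\{a,e,f\} - (a+e+f) = \mu_1 - (a+e+f)$. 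Together with the obvious constraint $\alpha \ge 0$, these are summarised by $\alpha \in [\max\{0,\mu_1-(a+e+f)\},\,a+e+f]$. The identical analysis applied to $C$ with side lengths $b,c,d,\alpha$ gives $\alpha \in [\max\{0,\mu_2-(b+c+d)\},\,b+c+d]$. Intersecting these two intervals yields precisely the condition in the lemma.

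The main obstacle I expect is the gluing step in the first paragraph: one must be careful that an arbitrary pair of realizations of $C$ and $C'$ need not be directly glueable by an orientation-preserving isometry, since the two subcycles may lie on the same side of the line through the images of $v_1$ and $v_4$. This is handled by noting that reflecting $C'$ across this line produces another valid realization of $(C', l_{G_2}|_{C'})$, after which an orientation-preserving isometry suffices. The forward implication requires no subtlety, and the polygon criterion used in the second paragraph is classical, so the remaining work reduces to the routine interval computations indicated above.
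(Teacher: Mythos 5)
Your proof is correct and follows essentially the same route as the paper: decompose $G_{2}$ into the two $4$-cycles sharing the edge $v_{1}v_{4}$, reduce realizability of $(G_{2},l_{G_{2}})$ to simultaneous realizability of both cycles with a common value of $\alpha$, and apply the polygon-inequality criterion (Lemma \ref{lemmG0}) to each. Your explicit treatment of the gluing step via a reflection of one cycle is a detail the paper dismisses with ``clearly,'' but it does not change the argument.
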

\begin{proof} Consider the paths $P^{1}$ and $P^{2}$ contained in $G_{1}$ with respective edge sets $E_{P^{1}}=\{v_{4}v_{5},v_{5}v_{6},v_{6}v_{1}\}$ and $E_{P^{2}}=\{v_{1}v_{2},v_{2}v_{3},v_{3}v_{4}\}$. Consider also the cycles $C^{1}$ and $C^{2}$ contained in $G_{2}$ with respective edge sets  $E_{C^{1}}=E_{P^{1}}\cup v_{1}v_{4}$ and $E_{C^{2}}=E_{P^{2}}\cup v_{1}v_{4}$. Clearly $(G_{2},l_{G_{2}})$ is realizable if and only if both $(C^{1},l_{C^{1}})$ and $(C^{2},l_{C^{2}})$ are realizable \textit{and} both $l_{C^{1}}$ and $l_{C^{2}}$ assign the same (permissible) value of $\alpha$ to the edge $v_{1}v_{4}$. It now follows from Lemma \ref{lemmG0} that $\alpha\in [\max\{0,\mu_{1}-(a+e+f)\},a+e+f]\cap [\max\{0,\mu_{2}-(b+c+d)\},b+c+d]$ where $\mu_{1}=2.\max\{a,e,f\}$ and $\mu_{2}=2.\max\{b,c,d\}$.  
\end{proof}

Before considering the weighted graph $(G_{3}, l_{ G_{3}})$ the concept of a \textit{workspace} is introduced. For more details regarding workspaces see \cite{CS}, \cite{MT} or \cite{TW}, where the concept first appears. Given a weighted graph $(G,l)$, then the \textit{workspace} of a vertex $v$ with respect to the graph $G$, the length function $l$ and an edge $ab\in E_{G}$ where $l(ab)>0$, is defined as the image of the map $M(G,l)\to M(H,l_{\mid H})$ i.e.
\begin{center}$W_{G,l,ab}(v)=im(M(G,l)\to M(H,l_{\mid H}))$ \end{center}
where $H=(\{a,b,v\},\{ab\})$ and $l_{\mid H}$ is the restriction of $l$ induced by $H\subset G$.\\

Note that the moduli space $M(H,l_{\mid H})$ is in fact a copy of $\mathbb E^2$. It is possible to construct an explicit homeomorphism $\varphi_{a}$ as follows. For each $[p] \in M(H,l_{\mid H})$, let $q$ be the unique realization in $C(H,l_{\mid H})$ that satisfies $q(a) = (0,0)$, $q(b) = (l(ab),0)$, and $[q]=[p]$ in $M(H,l_{\mid H})$. It is now possible to define $\varphi_{a} ([p]) = q(v)$. It is clear that $\varphi_{a}: M(H,l_{\mid H}) \rightarrow \mathbb{E}^2$ is a homeomorphism.  In the sequel, the map $\varphi_{a}$ is used to identify the workspace of a vertex with a particular subset of $\mathbb{E}^2$. 

\begin{lem}\label{varphi}
Given a realizable weighted graph $(G_{2}, l_{ G_{2}})$, as above, then the subset of $\mathbb{R}^{\geq 0}$ from which the value of $\beta=l_{ G_{3}}(v_{3}v_{6})$ can be chosen so that $(G_{3}, l_{ G_{3}})$ is realizable is an interval or the disjoint union of two intervals.
\end{lem}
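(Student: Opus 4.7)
The plan is to represent elements of $M(G_2, l_{G_2})$ via $C_{v_1,v_4}(G_2, l_{G_2})$, in which $v_1 = (0,0)$ and $v_4 = (\alpha,0)$. Once $v_1$ and $v_4$ are fixed, the constraints on the pair $(v_2,v_3)$ come only from the $4$-cycle $v_1v_2v_3v_4$, and those on $(v_5,v_6)$ come only from the $4$-cycle $v_1v_6v_5v_4$; in particular, any admissible placement of $v_3$ may be combined freely with any admissible placement of $v_6$. Identifying the workspaces $W_{G_2,l_{G_2},v_1v_4}(v_3)$ and $W_{G_2,l_{G_2},v_1v_4}(v_6)$ with subsets $W_3, W_6 \subseteq \mathbb{E}^2$ via $\varphi_{v_1}$, the set of admissible values of $\beta$ is
\[
B = \{\,|x - y| : x \in W_3,\ y \in W_6\,\}.
\]

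Next I would describe $W_3$ explicitly: $x \in W_3$ if and only if $|x - v_4| = d$ and $|b - c| \leq |x - v_1| \leq b + c$, so $W_3$ is the intersection of the circle of radius $d$ about $v_4$ with the annulus $\{z \in \mathbb{E}^2 : |b-c| \leq |z - v_1| \leq b+c\}$. Both factors are invariant under the reflection $R$ across the $x$-axis (which passes through $v_1$ and $v_4$), so $W_3$ is $R$-invariant. Each boundary circle of the annulus meets the circle $|x - v_4| = d$ in at most two points, symmetric about the $x$-axis, so $W_3$ has at most two connected components, and when it has two they are interchanged by $R$. The same analysis applied to $W_6$ (with $a$ as the relevant radius and $e, f$ as the arm lengths) yields a description with the same structural properties.

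Finally, I would count the intervals in $B$. Since $(G_2, l_{G_2})$ is realizable, $W_3$ and $W_6$ are non-empty, so $W_3 \times W_6$ has between one and four connected components; continuity of the distance map $(x,y) \mapsto |x - y|$ then shows $B$ is a union of at most four intervals. The reflection $R$ preserves $W_3, W_6$ setwise and preserves distances, so it acts on the set of components of $W_3 \times W_6$, and components lying in the same $R$-orbit produce the same image in $B$. When at most one of $W_3, W_6$ has two components, the $R$-action is transitive on the (at most two) components of $W_3 \times W_6$, yielding a single interval. When both $W_3$ and $W_6$ have two components, the four product components form exactly two $R$-orbits of size two, giving at most two intervals. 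In every case $B$ is either an interval or a disjoint union of two intervals.

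The main obstacle I anticipate is the structural analysis of $W_3$ and $W_6$: verifying that each has at most two connected components and, when it has two, that these are interchanged by $R$. Once this is secured, the $R$-symmetry argument reducing the naive bound of four intervals down to the claimed two is essentially immediate.
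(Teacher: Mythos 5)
Your proposal is correct and follows essentially the same route as the paper: fix the diagonal $v_{1}v_{4}$, observe that the admissible positions of $v_{3}$ and $v_{6}$ are independent subsets of two circles, each such subset having at most two components interchanged by reflection in the axis, and read off the admissible $\beta$ as the set of distances between the two workspaces. Your orbit-counting for the components of $W_{3}\times W_{6}$ under the diagonal reflection is a tidier way of finishing than the paper's ``brief consideration of subsets of two circles,'' which instead identifies the single exceptional configuration ($n>M$) by inspection, but the substance is the same.
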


\begin{proof}  Given a weighted graph $(C,l)$ where $C$ is a cycle such that $ij,jk\in E_{C}$, then it is well known, see \cite{CS}, that the image of $\varphi_{i}|_{ W_{C,l,ij}(k)}$ has one of three types; a circle $S$ with centre $(l(ij),0)$ and radius $l(jk)$, a contractible subset of $S$ or two disjoint contractible subsets of $S$. All three of these subsets of $S$ are also symmetric about the $x$-axis i.e. $w\in im(\varphi_{i}|_{ W_{C,l,ij}(k)})\iff \rho_{x}(w)\in im(\varphi_{i}|_{ W_{C,l,ij}(k)})$ where $\rho_{x}$ is the reflection in the $x$-axis. Returning to the $(G_{3},l_{G_{3}})$ case at hand. Consider the circle $S_{1}$ with centre $(l_{G_{2}}(v_{1}v_{4}),0)$ and radius $l_{G_{2}}(v_{1}v_{6})$ and the circle $S_{2}$ with centre $(0,0)$ and radius $l_{G_{2}}(v_{3}v_{4})$. Observe that the images of $\varphi_{v_{4}}|_{W_{G_{2},l_{G_{2}},v_{4}v_{1}}(v_{3})}$ and $\varphi_{v_{4}}|_{W_{G_{2},l_{G_{2}},v_{4}v_{1}}(v_{6})}$ are subsets of circles $S_{1}$ and $S_{2}$, respectively, and these images are denoted $W(v_{3})$ and $W(v_{6})$, respectively, for the rest of this proof. The structure of the set $X=\{d(w,w')\mid w\in W(v_{3})\; \textrm{and }\; w'\in  W(v_{6}) \}$ is now determined. \\

Consider the value $m=\min\{d(w,w')\mid w\in W(v_{3}) \;\textrm{and }\; w'\in W(v_{6})\}$ and the value $N=\max\{d(w,w')\mid w\in W(v_{3}) \;\textrm{and }\; w'\in W(v_{6})\}$. A brief consideration of subsets of two circles (centred on the $x$-axis) which are symmetric about the $x$-axis leads to the conclusion that there is only one case where $X\neq [m,N]$. This case is a special case of the instance where $W(v_{3})$ and $W(v_{6})$ are themselves two disjoint contractible subsets of $S_{1}$ and $S_{2}$ respectively. In order to describe this special case denote by $W(v_{3})^{+}$ the component of $W(v_{3})$ contained in the upper half-plane and denote by $W(v_{3})^{-}$ the component of $W(v_{3})$ contained in the lower half-plane. The components $W(v_{6})^{+}$ and $W(v_{6})^{-}$ of $W(v_{6})$ are defined similarly. Now, consider the value $M=\max\{d(w,w')\mid w\in W(v_{3})^{+} \;\textrm{and }\; w'\in W(v_{6})^{+}\}$ and the value $n=\min\{d(w,w')\mid w\in W(v_{3})^{-} \;\textrm{and }\; w'\in W(v_{6})^{+}\}$. The aforementioned special case occurs whenever $n>M$ and so the subset of $\mathbb{R}^{\geq 0}$ from which the value of $\beta$ can be chosen so that $(G_{3}, l_{ G_{3}})$ is realizable is the disjoint union of two intervals $[m,M]\sqcup[n,N]$. Consider Fig. \ref{disjoint} and note that the subsets $W(v_{3})=W(v_{3})^{+}\sqcup W(v_{3})^{-}=[w_{1},w_{2}]\sqcup[w_{3},w_{4}]$ and $W(v_{6})=W(v_{6})^{+}\sqcup W(v_{6})^{-}=[w_{5},w_{6}]\sqcup[w_{7},w_{8}]$ of the circles $S_{1}$ and $S_{2}$, respectively, and let $L=l_{G_{2}}(v_{1}v_{4})$. \\

\begin{figure}[t]
\begin{center}
\scalebox{0.95}{$\begin{xy}
\POS (-6,12) *\cir<2pt>{} ="a" *+!U{^{ (0,0)}} ,
 (30,12) *\cir<2pt>{} ="b" *+!U{^{(L,0)}} ,
 (30,26) *\cir<2pt>{} ="p1" *+!L{^{w_{1}}},
 (26,25) *\cir<2pt>{} ="p2" ,
 (27.3,21.3) *+!{^{w_{2}}},
 (26,-1) *\cir<2pt>{} ="p3" *+!U{^{w_{3}}},
 (30,-2) *\cir<2pt>{} ="p4" *+!U{^{w_{4}}},
 (0,24) *\cir<2pt>{} ="q1" *+!R{^{w_{5}}} ,
 (3,22) *\cir<2pt>{} ="q2" *+!U{^{w_{6}}} ,
 (3,2) *\cir<2pt>{} ="q3" *+!D{^{w_{7}}},
 (0,0) *\cir<2pt>{} ="q4" *+!R{^{w_{8}}},
 (2.5,-2.5) *\cir<0pt>{} ="qq4" ,
 (31,24.3) *\cir<0pt>{} ="pp1" ,
(0,26) *\cir<0pt>{} ="qq1" ,
 (29.5,29) *\cir<0pt>{} ="pp2" ,
 (44,12) *\cir<0pt>{} ="v" ,
  (3,2)*\cir<2pt>{} ="e" ,
 (-9,26)*+!{^{S_{2}}},
 (39,26)*+!{^{S_{1}}},
 (-30,12) *\cir<0pt>{} ="a1" ,
 (55,12) *\cir<0pt>{} ="b1",
  
\POS "a1" \ar@{-}  "b1", 
\POS "p1" \ar@/_.1pc/@{-}  "p2",
\POS "p3" \ar@/_.1pc/@{-}  "p4",
\POS "q1" \ar@/^.1pc/@{-}  "q2",
\POS "q3" \ar@/^.1pc/@{-}  "q4",
\POS "pp1" \ar@{|<-->|}^{N}  "qq4",
\POS "p2" \ar@{<-->}_{n}  "q3",
\POS "pp2" \ar@{|<-->|}_{M}  "qq1",
\POS "q2" \ar@{<-->}_{m}  "p2",

\POS (-6,12){\ellipse(13.5){.}}
\POS (30,12){\ellipse(14){.}}
 \end{xy}$}
\caption{The subset of $\mathbb{R}^{\geq 0}$ from which the value of $\beta$ can be chosen so that $(G_{3}, l_{ G_{3}})$ is realizable can be the disjoint union $[m,M]\sqcup [n,N]$}
\label{disjoint}
\end{center}
\end{figure}
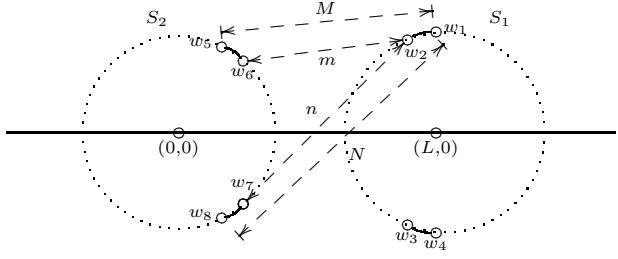

Hence, the subset of $\mathbb{R}^{\geq 0}$ from which the value of $\beta$ can be chosen so that $(G_{3}, l_{ G_{3}})$ is realizable is either an interval $[m,N]$ or the disjoint union of two intervals $[m,M]\sqcup [n,N]$, where $m,M,n$ and $N$ are defined as above. 
\end{proof}

\begin{lem}
Given a realizable weighted graph $(G_{3}, l_{ G_{3}})$, as above, then the subset of $\mathbb{R}^{\geq 0}$ from which the value of $\gamma=l(v_{2}v_{5})$ can be chosen so that $(K_{3,3}, l)$ is realizable is an interval or the disjoint union of two, three or four intervals.
\end{lem}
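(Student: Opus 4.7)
My plan is to mimic the structure of the proof of Lemma \ref{varphi}, now accounting for the fact that in $G_3$ the vertices $v_2$ and $v_5$ interact indirectly through the $\beta$-edge linking $v_3$ and $v_6$. First I would place $p(v_1)=(0,0)$ and $p(v_4)=(\alpha,0)$, identifying $M(G_3,l_{G_3})$ with $C_{v_1,v_4}(G_3,l_{G_3})$. The positions of $v_3$ and $v_6$ then form a realization of the 4-cycle $C_4 = v_1 v_6 v_3 v_4$ with lengths $a,\beta,d,\alpha$, yielding a moduli space $M(C_4,l_{C_4})$. Given a point of $M(C_4,l_{C_4})$, the vertex $v_2$ is one of the (at most) two intersection points of the circle around $v_1$ of radius $b$ with the circle around $v_3$ of radius $c$, and $v_5$ is similarly determined from $v_4$ and $v_6$. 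These two binary choices are independent, so the natural projection $\pi : M(G_3,l_{G_3}) \to M(C_4,l_{C_4})$ is at most four-to-one and decomposes $M(G_3,l_{G_3})$ into (at most) four sheets $B^{\epsilon_2,\epsilon_5}$ with $\epsilon_2, \epsilon_5 \in \{+,-\}$.

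On each connected component of each sheet, $\gamma = d(v_2,v_5)$ is continuous and its image is therefore a closed interval. To bound the total number of such intervals by four I would exploit two observations. First, reflection in the $x$-axis is an involution on $M(G_3,l_{G_3})$ that simultaneously exchanges $v_2^+ \leftrightarrow v_2^-$ and $v_5^+ \leftrightarrow v_5^-$ while preserving $\gamma$; this pairs $B^{+,+}$ with $B^{-,-}$ and $B^{+,-}$ with $B^{-,+}$, leaving two $\gamma$-equivalence classes of sheets. Second, applying the workspace argument of Lemma \ref{varphi} directly to the pair $(v_3,v_6)$ inside the 4-cycle shows that $M(C_4,l_{C_4})$ has at most two connected components, corresponding precisely to the at most two intervals of admissible $\beta$-values realized by the quadrilateral linkage. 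Combining the two sheet-classes with the two components of the base yields at most $2 \times 2 = 4$ intervals in the image of $\gamma$.

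I expect the main obstacle to be analyzing the monodromy of $\pi$ at degenerate configurations of $M(C_4,l_{C_4})$ where the two circles defining $v_2$ (or $v_5$) become tangent: at such a point the adjacent sheets $B^{+,\epsilon_5}$ and $B^{-,\epsilon_5}$ merge into a single component of $M(G_3,l_{G_3})$, so I must verify that the merged component still contributes only a single interval to the image of $\gamma$ (rather than splitting that interval into pieces). A secondary subtlety is the symmetry step at the fixed points of the reflection involution, where $v_2$ and $v_5$ both lie on the $x$-axis and the pairing of sheets degenerates; this case will have to be handled by direct inspection to confirm the bound of four is tight but not exceeded.
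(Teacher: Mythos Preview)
Your $2 \times 2$ count has a gap. You correctly pair $B^{+,+} \leftrightarrow B^{-,-}$ and $B^{+,-} \leftrightarrow B^{-,+}$ under reflection and correctly bound the components of $M(C_4,l_{C_4})$ by two, but the product ``two sheet-classes times two base components equals four intervals'' tacitly assumes that each sheet is connected over each base component. That need not hold: the sheet $B^{+,+}$ exists only over the region of $M(C_4,l_{C_4})$ where both $|v_1v_3| \in [\,|b-c|,\,b+c\,]$ and $|v_4v_6| \in [\,|e-f|,\,e+f\,]$, i.e.\ where the defining circles for $v_2$ and for $v_5$ each intersect. Each of these constraints is the sublevel set of a diagonal-length function on the four-bar linkage and can already consist of several arcs of the base circle; their intersection can have still more. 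So one sheet-class over one base component may contribute several intervals to the image of $\gamma$, and nothing in your outline rules this out. The obstacle you flag---sheets \emph{merging} at tangencies---actually works in your favour by reducing the component count; the difficulty you have not addressed is where the fibre of $\pi$ becomes \emph{empty} over part of the base and then nonempty again, which is what splits a sheet into pieces.

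For comparison, the paper does not fibre over $C_4$ at all. It introduces an auxiliary five-vertex graph $H$ built from three triangles sharing a common vertex, so that $C_{u_4,u_1}(H,h)$ consists of at most $2^{3}$ isolated points, and then takes $a = l(v_1v_6) = \epsilon \ll 1$ so that the connected components of $M(G_3,l_{G_3})$ are in one-to-one correspondence with those of $M(H,h)$; reflection pairing then yields at most four $\gamma$-intervals. The paper also supplies four explicit examples realising one, two, three, and four intervals respectively; these examples are required for the lemma as stated, and your proposal does not provide them.
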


\begin{proof} 
Consider a weighted graph $(H, h)$, where $V_{H}=\{u_{1},u_{2},u_{3},u_{4},u_{5}\}$ and $E_{H}=\{u_{1}u_{2},u_{1}u_{3},u_{1}u_{4},u_{1}u_{5},u_{2}u_{3},u_{3}u_{4},u_{4}u_{5}\}$ as shown in Fig. \ref{K16b}. Observe that $M(H, h)$ is homeomorphic to $C_{u_{4},u_{1}}(H,h)$. If $(H, h)$ is realizable, then for every $q\in C_{u_{4},u_{1}}(H, h)$ there exists a $\rho q\in C_{u_{4},u_{1}}(H, h)$ whose image is a reflection of the image of $q$ in the $x$-axis. Observe that $C_{u_{4},u_{1}}(H, h)$ can have at most $2^{3}$ connected components. Further motivation of this statement is provided in Fig. \ref{K16b}. The images of the realizations $p,q,r$ and $s$ of $(H,h)$ are shown, and there also exists four corresponding realizations $\rho p,\rho q,\rho r$ and $\rho s$ of $(H,h)$ in $C_{u_{4},u_{1}}(H, h)$ which are not shown. Clearly it is possible to chose length functions $ l_{ G_{3}}$ and $ h$ so that $b:=l_{ G_{3}}(v_{1}v_{2})= h(u_{1}u_{2}),c:=l_{ G_{3}}(v_{2}v_{3})= h(u_{2}u_{3}),d:=l_{ G_{3}}(v_{3}v_{4})= h(u_{3}u_{4}),e:=l_{ G_{3}}(v_{4}v_{5})= h(u_{4}u_{5}),f:=l_{ G_{3}}(v_{5}v_{1})= h(u_{5}u_{1}),\alpha:=l_{ G_{3}}(v_{1}v_{4})= h(u_{1}u_{4})$ and $\beta:=l_{ G_{3}}(v_{1}v_{3})= h(u_{1}u_{3})$ and that the edge length $a:=l_{ G_{3}}(v_{1}v_{6})$ is assigned an arbitrarily small length $\epsilon\ll 1$ by $ l_{ G_{3}}$. Consider the image of the realization $p\in C_{u_{4},u_{1}}(G_{3}, l_{ G_{3}})$ which is contained in Fig. \ref{K13a}. Note that the  incidence structure of the larger node, labeled $^{p_{6}}_{p_{1}}$, is shown in the detailed (\textit{blown-up}) section contained in the circle on the right-hand-side of Fig. \ref{K13a}. \\

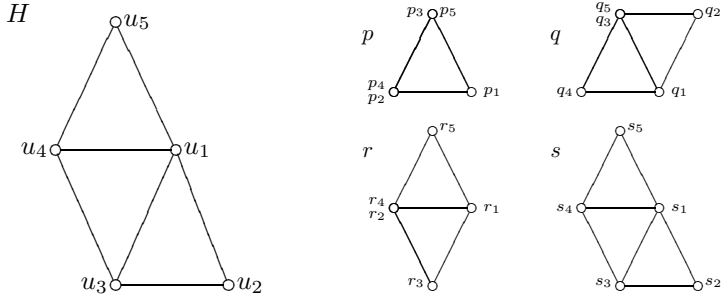
\begin{figure}[t]
\begin{center}
$\begin{xy}
\POS (13,18) *\cir<2pt>{} ="a" *+!L{u_{1}} ,
 (20,0) *\cir<2pt>{} ="b" *+!L{u_{2}},
 (5,0) *\cir<2pt>{} ="c" *+!R{u_{3}},
 (-3,18) *\cir<2pt>{} ="d" *+!R{u_{4}},
 (5,35) *\cir<2pt>{} ="e" *+!L{u_{5}},
  (-8,35)*+!{H},

\POS "a" \ar@{-}  "b",
\POS "a" \ar@{-}  "c",
\POS "a" \ar@{-}  "d",
\POS "a" \ar@{-}  "e",

\POS "b" \ar@{-}  "c",
\POS "c" \ar@{-}  "d",
\POS "d" \ar@{-}  "e",

\end{xy}$\hspace{1cm} \scalebox{0.86}{$\begin{xy}
\POS (12,30) *\cir<2pt>{} ="a" *+!L{_{\; p_{1}}} ,
 (0,30) *\cir<2pt>{} ="b" *+!R{_{ p_{2}}^{p_{4}}},
 (6,42) *\cir<2pt>{} ="c" *+!R{^{ p_{3}}},
 (0,30) *\cir<2pt>{} ="d" ,
 (6,42) *\cir<2pt>{} ="e" *+!L{^{ p_{5}}},
  (-4,38)*+!{p},
 
\POS "a" \ar@{-}  "b",
\POS "a" \ar@{-}  "c",
\POS "a" \ar@{-}  "d",
\POS "a" \ar@{-}  "e",
\POS "b" \ar@{-}  "c",
\POS "c" \ar@{-}  "d",
\POS "d" \ar@{-}  "e", 

\POS (41,30) *\cir<2pt>{} ="a" *+!L{_{\; q_{1}}} ,
 (47,42) *\cir<2pt>{} ="b" *+!L{^{ q_{2}}},
 (35,42) *\cir<2pt>{} ="c" *+!R{_{ q_{3}}^{q_{5}}},
 (29,30) *\cir<2pt>{} ="d" *+!R{_{ q_{4}}},
 (35,42) *\cir<2pt>{} ="e" ,
  (25,38)*+!{q},
 
\POS "a" \ar@{-}  "b",
\POS "a" \ar@{-}  "c",
\POS "a" \ar@{-}  "d",
\POS "a" \ar@{-}  "e",
\POS "b" \ar@{-}  "c",
\POS "c" \ar@{-}  "d",
\POS "d" \ar@{-}  "e", 

\POS (12,12) *\cir<2pt>{} ="a" *+!L{_{\; r_{1}}} ,
 (0,12) *\cir<2pt>{} ="b" *+!R{_{ r_{2}}^{r_{4}}},
 (6,0) *\cir<2pt>{} ="c" *+!R{^{ r_{3}}},
 (0,12) *\cir<2pt>{} ="d" ,
 (6,24) *\cir<2pt>{} ="e" *+!L{^{ r_{5}}},
  (-4,20)*+!{r},
 
\POS "a" \ar@{-}  "b",
\POS "a" \ar@{-}  "c",
\POS "a" \ar@{-}  "d",
\POS "a" \ar@{-}  "e",
\POS "b" \ar@{-}  "c",
\POS "c" \ar@{-}  "d",
\POS "d" \ar@{-}  "e", 

\POS (41,12) *\cir<2pt>{} ="a" *+!L{_{\; s_{1}}} ,
 (47,0) *\cir<2pt>{} ="b" *+!L{^{ s_{2}}},
 (35,0) *\cir<2pt>{} ="c" *+!R{^{ s_{3}}},
 (29,12) *\cir<2pt>{} ="d" *+!R{_{ s_{4}}},
 (35,24) *\cir<2pt>{} ="e" *+!L{^{ s_{5}}},
  (25,20)*+!{s},
 
\POS "a" \ar@{-}  "b",
\POS "a" \ar@{-}  "c",
\POS "a" \ar@{-}  "d",
\POS "a" \ar@{-}  "e",
\POS "b" \ar@{-}  "c",
\POS "c" \ar@{-}  "d",
\POS "d" \ar@{-}  "e", 
\end{xy}$}

\caption{The graph $H$ and the images of the realizations $p$, $q$, $r$ and $s$ of $(H,h)$ given an equilateral length function $h$}
\label{K16b}
\end{center}
\end{figure}

\begin{figure}[b]
\begin{center}
\scalebox{1}{$\begin{xy}
\POS (20,20) *\cir<4pt>{} ="a"  ,
(20.5,18.5)  *+!U{_{p_{1}}} ,
(20.5,20)  *+!D{^{p_{6}}} ,
 (32,0) *\cir<2pt>{} ="b" *+!L{^{ p_{2}}},
 (10,0) *\cir<2pt>{} ="c" *+!R{^{ p_{3}}},
 (0,20) *\cir<2pt>{} ="d" *+!R{^{ p_{4}}},
 (10,38) *\cir<2pt>{} ="e" *+!L{^{ p_{5}}},
 (55,20) *\cir<2pt>{} ="h" *+!U{_{ p_{1}}},
 (55,25) *\cir<2pt>{} ="i" *+!D{^{ p_{6}}},
 (50,33) *\cir<0pt>{} ="j" *+!D{^{\rm{to}\; p_{5}}},
 (50,7) *\cir<0pt>{} ="j1",
 (41,20) *\cir<0pt>{} ="k" *+!R{^{\rm{to}\; p_{4}}},
 (45,10) *\cir<0pt>{} ="l" *+!UR{^{\rm{to}\; p_{3}}},
 (65,10) *\cir<0pt>{} ="m" *+!UL{^{\rm{to}\; p_{2}}},

\POS "a" \ar@{-}^{b}  "b",
\POS "a" \ar@{-}_{\beta}  "c",
\POS "a" \ar@{-}_{\alpha}  "d",
\POS "a" \ar@{-}_{f}  "e",
\POS "a" \ar@{.}  "j",
\POS "a" \ar@{.}  "j1",
\POS "b" \ar@{-}^{c}  "c",
\POS "c" \ar@{-}^{d}  "d",
\POS "d" \ar@{-}^{e}  "e",
\POS "h" \ar@{-}_{a=\epsilon}  "i",
\POS "h" \ar@{-}  "k",
\POS "h" \ar@{-}  "m",
\POS "i" \ar@{-}  "j",
\POS "i" \ar@{-}  "l",

\POS (55,20){\ellipse(14){}}

\end{xy}$}

\caption{The image of a realization $p$ of the weighted graph $(G_{3}, l_{ G_{3}})$ where $l_{G_{3}}$ assigns the edge $v_{1}v_{6}$ the length $\epsilon$ such that $l_{ G_{3}}(v_{1}v_{6})= a=\epsilon \ll 1$}
\label{K13a}
\end{center}
\end{figure}
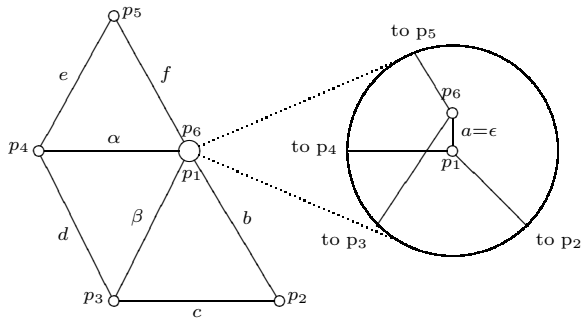

As a result of such length functions, then each connected component of the moduli space $M(G_{3}, l_{ G_{3}})$ is a circle whereas each connected component of the moduli space $M(H, h)$ is a point. The salient point here is that the connected components of $M(H, h)$ and $M(G_{3}, l_{ G_{3}})$ are in a one-to-one correspondence.\\
 
Observe that if $C_{u_{4},u_{1}}(G_{3}, l_{ G_{3}})$ has $8$ connected components then these eight components must occur in pairs such that for any realization $p$ contained in one component of  $C_{u_{4},u_{1}}(G_{3}, l_{ G_{3}})$ there exists a realization, denoted $\rho p$, in another component of $C_{u_{4},u_{1}}(G_{3}, l_{ G_{3}})$ such that the image of $\rho p$ is a reflection of the image of $p$ in the $x$-axis. As a reflection is an isometry, then the distance $d(p_{2}, p_{5})$ must be equal to the distance $ d(\rho p(v_{2}), \rho p(v_{5}))$, for each $p\in C_{u_{4},u_{1}}(G_{3}, l_{ G_{3}})$. Hence, the subset of $\mathbb{R}^{\geq 0}$ from which the value of $\gamma=l(v_{2}v_{5})$ can be chosen so that $(K_{3,3}, l)$ is realizable, can have at most four connected components. Examples \ref{ex1}, \ref{ex2}, \ref{ex3} and \ref{ex4} are, respectively, occurrences of the subset of $\mathbb{R}^{\geq 0}$ from which the value of $\gamma$ can be chosen so that $(K_{3,3}, l)$ is realizable being one, two, three and four, disjoint intervals. This completes the proof.
\end{proof}


\begin{ex}\label{ex1} Suppose that $a=b=c=d=e=f=\alpha=\beta =1$ then the subset of $\mathbb{R}^{\geq 0}$ from which the value of $\gamma$ can be chosen so that $(K_{3,3}, l)$ is realizable is the set $\{1\}$. See Figure \ref{K13e}.

\begin{figure}[H]
\begin{center}
{
$\begin{xy}
\POS (12,0) *\cir<2pt>{} ="a" *+!L{^{ p_{1}}} ,
 (12,12) *\cir<2pt>{} ="b" *+!L{_{ p_{2}}^{ p_{6}}},
 (0,12) *\cir<2pt>{} ="c" *+!R{_{ p_{3}}^{ p_{5}}},
 (0,0) *\cir<2pt>{} ="d" *+!R{^{ p_{4}}},
 (0,12) *\cir<2pt>{} ="e" ,
  (-10,12)*+!{p},
 
\POS "a" \ar@{-}  "b",
\POS "a" \ar@{-}  "d",
\POS "b" \ar@{-}  "c",
\POS "c" \ar@{-}  "d",
\POS "d" \ar@{-}  "e", 
\end{xy}$
}

\caption{The subset of $\mathbb{R}^{\geq 0}$ from which the value of $\gamma$ can be chosen so that $(K_{3,3}, l)$ is realizable is a single point}
\label{K13e}
\end{center}
\end{figure}
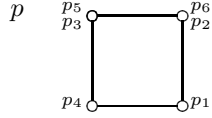
\end{ex}

In Examples \ref{ex2}, \ref{ex3} and \ref{ex4} the larger nodes labeled $^{\; p_{6}}_{\; p_{1}}$, $^{\; q_{6}}_{\; q_{1}}$, $^{\; r_{6}}_{\; r_{1}}$ and $^{\; s_{6}}_{\; s_{1}}$ are each analogous to the larger node contained in Fig. \ref{K13a} i.e. they possess the same incidence structure. It should also be noted that, in the interest of brevity, the images of the four realizations $\rho p, \rho q, \rho r$ and $\rho s$ of $(K_{3,3},l)$, (whose images are the reflections in the $x$-axis of the images of $p$, $q$, $r$ and $s$, respectively) are omitted from Fig. \ref{K13d}, Fig. \ref{K13c} and Fig. \ref{K13b}.

\begin{ex}\label{ex2} Suppose that $a=\epsilon \ll b=c=d=e=\alpha=1$ and $f=\beta=\sqrt{2}$ then the subset of $\mathbb{R}^{\geq 0}$ from which the value of $\gamma=l(v_{2}v_{5})$ can be chosen so that $(K_{3,3}, l)$ is realizable is $[1-\delta_{1},1+\delta_{1}]$ $\sqcup$ $[\sqrt{5}-\delta_{2},\sqrt{5}-\delta_{2}]$ where $\delta_{1},\delta_{2} \ll 1$. See Figure \ref{K13d}.

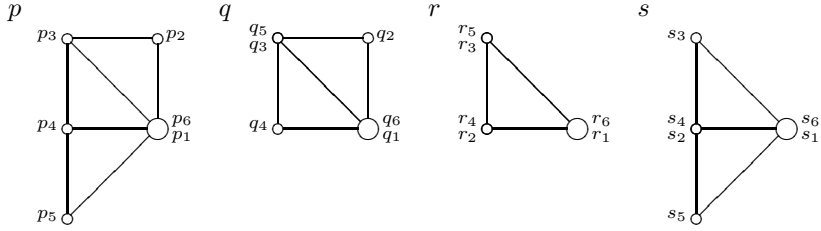
\begin{figure}[t]
\begin{center}
{$\begin{xy}
\POS (12,0) *\cir<4pt>{} ="a" *+!L{^{\; p_{6}}_{\; p_{1}}} ,
 (12,12) *\cir<2pt>{} ="b" *+!L{^{ p_{2}}},
 (0,12) *\cir<2pt>{} ="c" *+!R{^{ p_{3}}},
 (0,0) *\cir<2pt>{} ="d" *+!R{^{ p_{4}}},
 (0,-12) *\cir<2pt>{} ="e" *+!R{^{ p_{5}}},
  (-7,15)*+!{p},
 
\POS "a" \ar@{-}  "b",
\POS "a" \ar@{-}  "c",
\POS "a" \ar@{-}  "d",
\POS "a" \ar@{-}  "e",
\POS "b" \ar@{-}  "c",
\POS "c" \ar@{-}  "d",
\POS "d" \ar@{-}  "e", 
\end{xy}$
$\begin{xy}
\POS (12,0) *\cir<4pt>{} ="a" *+!L{^{\; q_{6}}_{\; q_{1}}} ,
 (12,12) *\cir<2pt>{} ="b" *+!L{^{ q_{2}}},
 (0,12) *\cir<2pt>{} ="c" *+!R{_{ q_{3}}^{ q_{5}}},
 (0,0) *\cir<2pt>{} ="d" *+!R{^{ q_{4}}},
 (0,12) *\cir<2pt>{} ="e" ,
  (-7,15)*+!{q},
 
\POS "a" \ar@{-}  "b",
\POS "a" \ar@{-}  "c",
\POS "a" \ar@{-}  "d",
\POS "a" \ar@{-}  "e",
\POS "b" \ar@{-}  "c",
\POS "c" \ar@{-}  "d",
\POS "d" \ar@{-}  "e", 
\end{xy}$
$\begin{xy}
\POS (12,0) *\cir<4pt>{} ="a" *+!L{^{\; r_{6}}_{\; r_{1}}} ,
 (0,0) *\cir<2pt>{} ="b" *+!R{_{ r_{2}}^{ r_{4}}},
 (0,12) *\cir<2pt>{} ="c" *+!R{_{ r_{3}}^{ r_{5}}},
 (0,0) *\cir<2pt>{} ="d",
 (0,12) *\cir<2pt>{} ="e" ,
  (-7,15)*+!{r},
 
\POS "a" \ar@{-}  "b",
\POS "a" \ar@{-}  "c",
\POS "a" \ar@{-}  "d",
\POS "a" \ar@{-}  "e",
\POS "b" \ar@{-}  "c",
\POS "c" \ar@{-}  "d",
\POS "d" \ar@{-}  "e", 
\end{xy}$
$\begin{xy}
\POS (12,0) *\cir<4pt>{} ="a" *+!L{^{\; s_{6}}_{\; s_{1}}} ,
 (0,0) *\cir<2pt>{} ="b" *+!R{_{ s_{2}}^{ s_{4}}},
 (0,12) *\cir<2pt>{} ="c" *+!R{^{ s_{3}}},
 (0,0) *\cir<2pt>{} ="d" ,
  (0,-12) *\cir<2pt>{} ="e" *+!R{^{ s_{5}}},
  (-7,15)*+!{s},
 
\POS "a" \ar@{-}  "b",
\POS "a" \ar@{-}  "c",
\POS "a" \ar@{-}  "d",
\POS "a" \ar@{-}  "e",
\POS "b" \ar@{-}  "c",
\POS "c" \ar@{-}  "d",
\POS "d" \ar@{-}  "e", 
\end{xy}$}

\caption{The subset of $\mathbb{R}^{\geq 0}$ from which the value of $\gamma$ can be chosen so that $(K_{3,3}, l)$ is realizable is two disjoint intervals}
\label{K13d}
\end{center}
\end{figure}
\end{ex}


\begin{ex}\label{ex3} Suppose that $a=\epsilon \ll b=c=\frac{\sqrt{5}}{2},d=\alpha=1,e=f=\frac{\sqrt{5}}{4}$ and $\beta=\sqrt{2}$ then the subset of $\mathbb{R}^{\geq 0}$ from which the value of $\gamma=l(v_{2}v_{5})$ can be chosen so that $(K_{3,3}, l)$ is realizable is $[\frac{\sqrt{5}}{4}-\delta_{1},\frac{\sqrt{5}}{4}+\delta_{1}]$ $\sqcup$ $[\frac{\sqrt{13}}{2}-\delta_{2},\frac{\sqrt{13}}{2}+\delta_{2}]$ $\sqcup$ $[\frac{\sqrt{29}}{2}-\delta_{3},\frac{\sqrt{29}}{2}+\delta_{3}]$ where $\delta_{1},\delta_{2},\delta_{3}\ll 1$. See Figure \ref{K13c}.

\begin{figure}[t]
\begin{center}
{$\begin{xy}
\POS (12,0) *\cir<4pt>{} ="a" *+!L{^{\; p_{6}}_{\; p_{1}}} ,
 (18,18) *\cir<2pt>{} ="b" *+!L{^{ p_{2}}},
 (0,12) *\cir<2pt>{} ="c" *+!R{^{ p_{3}}},
 (0,0) *\cir<2pt>{} ="d" *+!R{^{ p_{4}}},
 (6,12) *\cir<2pt>{} ="e" *+!L{^{ p_{5}}},
  (-2,18)*+!{p},
 
\POS "a" \ar@{-}  "b",
\POS "a" \ar@{-}  "c",
\POS "a" \ar@{-}  "d",
\POS "a" \ar@{-}  "e",
\POS "b" \ar@{-}  "c",
\POS "c" \ar@{-}  "d",
\POS "d" \ar@{-}  "e", 
\end{xy}$
$\begin{xy}
\POS (18,0) *\cir<4pt>{} ="a" *+!L{^{\; q_{6}}_{\; q_{1}}} ,
 (0,-6) *\cir<2pt>{} ="b" *+!U{^{ q_{2}}},
 (6,12) *\cir<2pt>{} ="c" *+!R{^{ q_{3}}},
 (6,0) *\cir<2pt>{} ="d" *+!UR{^{ q_{4}}},
 (12,12) *\cir<2pt>{} ="e" *+!L{^{ q_{5}}},
  (-2,18)*+!{q},
 
\POS "a" \ar@{-}  "b",
\POS "a" \ar@{-}  "c",
\POS "a" \ar@{-}  "d",
\POS "a" \ar@{-}  "e",
\POS "b" \ar@{-}  "c",
\POS "c" \ar@{-}  "d",
\POS "d" \ar@{-}  "e", 
\end{xy}$
$\begin{xy}
\POS (18,0) *\cir<4pt>{} ="a" *+!L{^{\; r_{6}}_{\; r_{1}}} ,
 (0,-6) *\cir<2pt>{} ="b" *+!U{^{ r_{2}}},
 (6,12) *\cir<2pt>{} ="c" *+!R{^{ r_{3}}},
 (6,0) *\cir<2pt>{} ="d" *+!UR{^{ r_{4}}},
 (12,-12) *\cir<2pt>{} ="e" *+!L{^{ r_{5}}},
  (-2,18)*+!{r},
 
\POS "a" \ar@{-}  "b",
\POS "a" \ar@{-}  "c",
\POS "a" \ar@{-}  "d",
\POS "a" \ar@{-}  "e",
\POS "b" \ar@{-}  "c",
\POS "c" \ar@{-}  "d",
\POS "d" \ar@{-}  "e", 
\end{xy}$
$\begin{xy}
\POS (12,0) *\cir<4pt>{} ="a" *+!L{^{\; s_{6}}_{\; s_{1}}} ,
 (18,18) *\cir<2pt>{} ="b" *+!L{^{ s_{2}}},
 (0,12) *\cir<2pt>{} ="c" *+!R{^{ s_{3}}},
 (0,0) *\cir<2pt>{} ="d" *+!R{^{ s_{4}}},
  (6,-12) *\cir<2pt>{} ="e" *+!L{^{ s_{5}}},
  (-2,18)*+!{s},
 
\POS "a" \ar@{-}  "b",
\POS "a" \ar@{-}  "c",
\POS "a" \ar@{-}  "d",
\POS "a" \ar@{-}  "e",
\POS "b" \ar@{-}  "c",
\POS "c" \ar@{-}  "d",
\POS "d" \ar@{-}  "e", 
\end{xy}$}

\caption{The subset of $\mathbb{R}^{\geq 0}$ from which the value of $\gamma$ can be chosen so that $(K_{3,3}, l)$ is realizable is three disjoint intervals}
\label{K13c}
\end{center}
\end{figure}
\end{ex}

\begin{ex}\label{ex4} Suppose that $a=\epsilon \ll b=\sqrt{17}, c=\sqrt{29},d=\sqrt{5}, \alpha=3, e=\sqrt{13}, f=\sqrt{10}$ and $\beta=5\sqrt{2}$ then the subset of $\mathbb{R}^{\geq 0}$ from which the value of $\gamma=l(v_{2}v_{5})$ can be chosen so that $(K_{3,3}, l)$ is realizable is  $[\ell_{1}-\delta_{1},\ell_{1}+\delta_{1}]$ $\sqcup$ $[\ell_{2}-\delta_{2},\ell_{2}+\delta_{2}]$ $\sqcup$  $[\ell_{3}-\delta_{3},\ell_{3}+\delta_{3}]$ $\sqcup$ $[\ell_{4}-\delta_{4},\ell_{4}+\delta_{4}]$ where $\delta_{1},\delta_{2},\delta_{3},\delta_{4}\ll 1$ and $\ell_{1},\ell_{2},\ell_{3},\ell_{4}$ are all distinct and $d(\ell_{i},\ell_{j})>\delta_{i}+\delta_{j}$ for all distinct $2$-element subsets $\{i,j\}$ contained in $\{1,2,3,4\}$. See Figure \ref{K13b}.

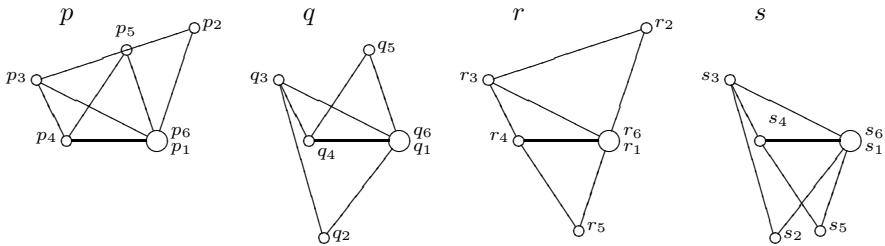
\begin{figure}[b]
\begin{center}
{$\begin{xy}
\POS (16,0) *\cir<4pt>{} ="a" *+!L{^{\; p_{6}}_{\; p_{1}}} ,
 (21,15) *\cir<2pt>{} ="b" *+!L{^{ p_{2}}},
 (0,8) *\cir<2pt>{} ="c" *+!R{^{ p_{3}}},
 (4,0) *\cir<2pt>{} ="d" *+!R{^{ p_{4}}},
 (12,12) *\cir<2pt>{} ="e" *+!D{^{ p_{5}}},
  (4,16)*+!{p},
 
\POS "a" \ar@{-}  "b",
\POS "a" \ar@{-}  "c",
\POS "a" \ar@{-}  "d",
\POS "a" \ar@{-}  "e",
\POS "b" \ar@{-}  "c",
\POS "c" \ar@{-}  "d",
\POS "d" \ar@{-}  "e", 
\end{xy}$
$\begin{xy}
\POS (16,0) *\cir<4pt>{} ="a" *+!L{^{\; q_{6}}_{\; q_{1}}} ,
 (6,-13) *\cir<2pt>{} ="b" *+!L{^{ q_{2}}},
 (0,8) *\cir<2pt>{} ="c" *+!R{^{ q_{3}}},
 (4,0) *\cir<2pt>{} ="d" *+!UL{^{ q_{4}}},
  (12,12) *\cir<2pt>{} ="e" *+!L{^{ q_{5}}},
  (4,16)*+!{q},
 
\POS "a" \ar@{-}  "b",
\POS "a" \ar@{-}  "c",
\POS "a" \ar@{-}  "d",
\POS "a" \ar@{-}  "e",
\POS "b" \ar@{-}  "c",
\POS "c" \ar@{-}  "d",
\POS "d" \ar@{-}  "e", 
\end{xy}$
$\begin{xy}
\POS(16,0) *\cir<4pt>{} ="a" *+!L{^{\; r_{6}}_{\; r_{1}}} ,
 (21,15) *\cir<2pt>{} ="b" *+!L{^{ r_{2}}},
 (0,8) *\cir<2pt>{} ="c" *+!R{^{ r_{3}}},
 (4,0) *\cir<2pt>{} ="d" *+!R{^{ r_{4}}},
  (12,-12) *\cir<2pt>{} ="e" *+!L{^{ r_{5}}},
  (4,16)*+!{r},
 
\POS "a" \ar@{-}  "b",
\POS "a" \ar@{-}  "c",
\POS "a" \ar@{-}  "d",
\POS "a" \ar@{-}  "e",
\POS "b" \ar@{-}  "c",
\POS "c" \ar@{-}  "d",
\POS "d" \ar@{-}  "e", 
\end{xy}$
$\begin{xy}
\POS (16,0) *\cir<4pt>{} ="a" *+!L{^{\; s_{6}}_{\; s_{1}}} ,
 (6,-13) *\cir<2pt>{} ="b" *+!L{^{ s_{2}}},
 (0,8) *\cir<2pt>{} ="c" *+!R{^{ s_{3}}},
 (4,0) *\cir<2pt>{} ="d" *+!DL{^{ s_{4}}},
  (12,-12) *\cir<2pt>{} ="e" *+!L{^{ s_{5}}},
  (4,16)*+!{s},
 
\POS "a" \ar@{-}  "b",
\POS "a" \ar@{-}  "c",
\POS "a" \ar@{-}  "d",
\POS "a" \ar@{-}  "e",
\POS "b" \ar@{-}  "c",
\POS "c" \ar@{-}  "d",
\POS "d" \ar@{-}  "e", 
\end{xy}$}

\caption{The subset of $\mathbb{R}^{\geq 0}$ from which the value of $\gamma$ can be chosen so that $(K_{3,3}, l)$ is realizable is four disjoint intervals}
\label{K13b}
\end{center}
\end{figure}
\end{ex}

This analysis of the $(K_{3,3}, l)$ case is now distilled into Theorem \ref{k33thm}.

\begin{thm}\label{k33thm}
Given the weighted graph $(G_{0}, l_{ G_{0}})$, then the subset of $\mathbb{R}^{\geq 0}$ from which the value of $f=l_{G_{1}}(v_{5}v_{6})$ can be chosen so that $(G_{1}, l_{ G_{1}})$ is realizable is an interval; having chosen $l_{ G_{1}}$ and hence fixed $f$, then the subset of $\mathbb{R}^{\geq 0}$ from which the value of $\alpha=l_{G_{2}}(v_{1}v_{4})$ can be chosen so that $(G_{2}, l_{ G_{2}})$ is realizable is an interval; having chosen $l_{ G_{2}}$ and hence fixed $\alpha$, then the subset of $\mathbb{R}^{\geq 0}$ from which the value of $\beta=l_{G_{3}}(v_{3}v_{6})$ can be chosen so that $(G_{3}, l_{ G_{3}})$ is realizable is either an interval or the disjoint union of two intervals; finally, having chosen $l_{ G_{3}}$ and hence fixed $\beta$, then the subset of $\mathbb{R}^{\geq 0}$ from which the value of $\gamma=l(v_{2}v_{5})$ can be chosen so that $(K_{3,3}, l)$ is realizable is an interval or the disjoint union of two, three or four intervals.
\end{thm}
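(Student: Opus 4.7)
My plan is to recognize Theorem \ref{k33thm} as a direct synthesis of the four preceding lemmas, applied sequentially along the filtration $G_{0}\subset G_{1}\subset G_{2}\subset G_{3}\subset K_{3,3}$. Each of the four clauses in the theorem is literally the conclusion of one of those lemmas, and the successive ``having chosen \dots and hence fixed \dots'' phrases are precisely designed so that the output of each lemma (a choice of edge-length making the current graph realizable) satisfies the hypothesis of the next one.

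More concretely, I would first observe that $(G_{0}, l_{G_{0}})$ is automatically realizable, since $G_{0}$ is a path. Hence Lemma \ref{lemmG0} applies without additional hypothesis and yields the claimed interval for $f$; any choice from this interval produces a realizable $(G_{1}, l_{G_{1}})$, which is the exact hypothesis needed to invoke the second lemma. The second lemma then exhibits the set of admissible $\alpha$ as an intersection of two closed intervals, hence as an interval, settling the second clause. Fixing such an $\alpha$ produces a realizable $(G_{2}, l_{G_{2}})$, enabling the application of Lemma \ref{varphi}, which gives the third clause (interval or disjoint union of two intervals for $\beta$). Finally, fixing a suitable $\beta$ produces a realizable $(G_{3}, l_{G_{3}})$, and the final lemma before the theorem gives the fourth clause (interval or disjoint union of up to four intervals for $\gamma$). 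Chaining these four biconditionals gives the theorem.

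The step which a priori looks most delicate is verifying that the inputs really do chain: the conditions ``$(G_{i}, l_{G_{i}})$ is realizable'' demanded as hypotheses in the later lemmas are \emph{exactly} what the earlier lemmas deliver, since each of those lemmas is an if-and-only-if statement. So no genuine obstacle arises: the only real work has already been done in establishing the four lemmas, and what remains is a careful bookkeeping of how the successive edge-choices propagate. I would therefore write the proof as a four-line argument citing Lemma \ref{lemmG0}, the second lemma, Lemma \ref{varphi}, and the final lemma in order, with one sentence per step explaining that the realizability hypothesis of the next lemma is met because of the biconditional form of the previous one.
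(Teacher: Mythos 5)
Your proposal matches the paper exactly: the paper offers no separate argument for Theorem \ref{k33thm}, stating only that the preceding analysis ``is now distilled into'' it, i.e.\ the theorem is precisely the sequential concatenation of the four lemmas along the filtration $G_{0}\subset G_{1}\subset G_{2}\subset G_{3}\subset K_{3,3}$, just as you describe. The only cosmetic caveat is that the third and fourth lemmas are phrased as descriptions of the admissible sets rather than as biconditionals, but this does not affect the chaining argument.
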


\section{The Moduli Space $M(K_{3,3}, l)$ }
A nice corollary of the analysis of previous section is that it is possible to establish a result relating to the connectedness of the moduli space $M(K_{3,3},l)$.

\begin{lem}Given the weighted graph $(K_{3,3}, l)$ then the moduli space  $M(K_{3,3}, l)$ can only have one, two, four, six or eight connected components.
\end{lem}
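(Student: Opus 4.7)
The plan is to combine the reflection involution in the $x$-axis with the component bound established in the proof of the preceding lemma.

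First, I would identify $M(K_{3,3},l)$ with $C_{v_{4},v_{1}}(K_{3,3},l)$ and let $\rho$ denote reflection in the $x$-axis. Since $\rho$ preserves every pairwise distance it carries $C_{v_{4},v_{1}}(K_{3,3},l)$ to itself, so $\rho$ acts as an involution on this space and in particular permutes its connected components. Consequently the set of components decomposes into $\rho$-orbits of size one or two, where a size-one orbit occurs precisely when the associated component is $\rho$-invariant (which, in the generic $0$-dimensional situation, means the component consists of a single realization with every vertex on the $x$-axis).

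Second, I would invoke the count from the proof of the preceding lemma: $C_{v_{4},v_{1}}(G_{3},l_{G_{3}})$ has at most $2^{3}=8$ connected components, already organized into $\rho$-pairs indexed by the (at most four) disjoint $\gamma$-intervals. Because $C_{v_{4},v_{1}}(K_{3,3},l)$ sits inside $C_{v_{4},v_{1}}(G_{3},l_{G_{3}})$ as the locus where $d(v_{2},v_{5})=\gamma$, the inherited $\rho$-pair structure bounds the number of $\rho$-orbits of components of $M(K_{3,3},l)$ by four. In the generic situation in which no component is $\rho$-invariant, every orbit has size two, so the total component count is $2k$ with $k\in\{0,1,2,3,4\}$, producing precisely the even possibilities $2,4,6,8$.

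Finally, I would account for the count of one by considering the fully degenerate case in which every realization of $(K_{3,3},l)$ is $\rho$-invariant; here the four potential configuration types collapse onto a single $\rho$-fixed realization (as illustrated by Example \ref{ex1}), giving a single connected component. The main obstacle will be ruling out the mixed odd counts $3,5,7$, which would require a size-one $\rho$-orbit to coexist with at least one size-two orbit. To dispose of this I would argue that the existence of a $\rho$-invariant realization forces the length data $l$ to lie simultaneously on the common boundary of all four interval strata from the preceding lemma, and so collapses every other potential realization either into the same $\rho$-fixed point or into infeasibility; no size-two orbit can then accompany a size-one orbit, leaving one as the unique odd possibility.
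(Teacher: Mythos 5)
Your overall strategy --- identify $M(K_{3,3},l)$ with $C_{v_{4},v_{1}}(K_{3,3},l)$, let the reflection $\rho$ in the $x$-axis act as an involution, bound the number of components by $8$ via the $G_{3}$ analysis, and observe that components fall into $\rho$-orbits of size one or two --- is the same as the paper's. The genuine gap is in the step that excludes $3$, $5$ and $7$: you must rule out a size-one $\rho$-orbit coexisting with a size-two orbit, and your argument for this is not a proof. You assert that the existence of a $\rho$-invariant component ``forces the length data to lie on the common boundary of all four interval strata'' and collapses everything to a single $\rho$-fixed realization; neither claim is justified, and the second is false as a description of the one-component case. In Example \ref{ex1} the moduli space is homeomorphic to the moduli space of an equilateral $4$-cycle, which is connected but positive-dimensional and certainly not a single $\rho$-fixed point; a $\rho$-invariant component need not consist of realizations with all vertices on the $x$-axis. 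So your dichotomy ``generic $\Rightarrow$ all orbits of size two, degenerate $\Rightarrow$ everything collapses to one point'' does not cover the cases that actually need to be excluded, e.g.\ an isolated $\rho$-fixed realization sitting alongside a mirror pair of components, which would give three components.

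The paper closes this gap differently: it argues that if $M(K_{3,3},l)$ is disconnected then at least one workspace $W(v_{i})=\mathrm{im}\bigl(\varphi_{v_{4}}|_{W_{K_{3,3},l,v_{1}v_{4}}(v_{i})}\bigr)$, $i\in\{2,3,5,6\}$, must itself be disconnected (since connectedness of all four workspaces forces connectedness of the moduli space). Each $W(v_{i})$ is symmetric about the $x$-axis, so a disconnected $W(v_{i})$ splits as $W(v_{i})^{+}\sqcup W(v_{i})^{-}$, and the fibres of $\pi:M(K_{3,3},l)\to W(v_{i})$ over $W(v_{i})^{+}$ are carried by $\rho$ onto the fibres over $W(v_{i})^{-}$; hence \emph{every} component pairs off with a distinct mirror component and the total count is even. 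Combined with the bound of eight and the convention that the empty moduli space has one component, this yields $\{1,2,4,6,8\}$. If you want to salvage your version, you need an argument of this kind showing that disconnectedness precludes any $\rho$-invariant component, rather than an appeal to genericity plus an unsubstantiated degeneration claim. (A minor further point: the paper also exhibits each of the five values via Examples \ref{ex1}, \ref{ex2}, \ref{ex3} and \ref{ex5}, which your proposal does not address, though that is sharpness rather than the literal statement.)
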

\begin{proof} As outlined above in relation to the weighted graph $(H, h)$ where $H$ contains three $3$-cycles, the moduli space $M(K_{3,3}, l)$ can contain at most eight connected components. If $(K_{3,3},l)$ is not realizable then the moduli space $M(K_{3,3},l)$ is the empty set and has, by definition, a single component. It is now required to show that the moduli space $M(K_{3,3},l)$ cannot contain three, five or seven connected components. In a similar fashion to the proof of Lemma \ref{varphi}, let the image of $\varphi_{v_{4}}|_{W_{K_{3,3},l,v_{1}v_{4}}(v_{i})}$ be denoted $W(v_{i})$ for $i\in\{2,3,5,6\}$. Clearly if a $W(v_{i})$ is connected then this does not imply that the corresponding moduli space is connected. However, if all $W(v_{i})$, for $i\in\{2,3,5,6\}$, are connected then the corresponding moduli space must be connected. It follows that when $M(K_{3,3},l)$ is disconnected, there must exist at least one $W(v_{i})$, for $i\in\{2,3,5,6\}$, which is disconnected.   \\

Each $W(v_{i})$, for $i\in\{2,3,5,6\}$, is symmetric about the $x$-axis i.e. $w\in W(v_{i})\iff \rho_{x}(w)\in W(v_{i})$, where $\rho_{x}$ is the reflection in the $x$-axis. This means that if $M(K_{3,3},l)$ is disconnected then there exists some disconnected $W(v_{i})$, for $i\in\{2,3,5,6\}$, such that the images of the realizations contained in the fibres of $\pi:M(K_{3,3},l)\to W(v_{i})$ over $W(v_{i})^{+}\subset W(v_{i})$ are all reflections in the $x$-axis of the images of realizations contained in the fibres of $\pi$ over $W(v_{i})^{-}\subset W(v_{i})$. Hence, if $M(K_{3,3},l)$ is disconnected then the connected components of $M(K_{3,3},l)$ must occur in pairs where the images of realizations contained in these components differ by a reflection in the $x$-axis. As $M(K_{3,3},l)$ can have at most eight connected components, and as the empty set has one connected component, then $M(K_{3,3},l)$ cannot contain three, five or seven connected components.\\

In Example \ref{ex1} the moduli space $M(K_{3,3}, l)$ is homeomorphic to the moduli space of an equilateral $4$-cycle $(C,l_{C})$. The moduli space $M(C,l_{C})$ is well known to be a connected space, see \cite{SV}, hence the moduli space $M(K_{3,3},l)$ can have one (non-empty) component. In Example \ref{ex2} there are three realizations $q,r$ and $s$ in which the length of $\gamma$ is contained in the interval $[1-\delta_{1},1+\delta_{1}]$, with $\delta_{1}\ll 1$, such that there does not exist a continuous deformation between any two of the realizations $q,r$ and $s$. This means that such a choice of $\gamma$ results in the moduli space $M(K_{3,3}, l)$ containing six connected components. In the same example, choosing $\gamma$ to be contained in the interval $[\sqrt{5}-\delta_{2},\sqrt{5}+\delta_{2}]$, where $\delta_{2}\ll 1$,  results in the moduli space $M(K_{3,3}, l)$ containing two connected components. In Example \ref{ex3} there does not exist a continuous deformation between realizations $p$ and $r$. The length of $\gamma$ is contained in the interval $[\frac{\sqrt{5}}{4}-\delta_{3},\frac{\sqrt{5}}{4}+\delta_{3}]$, where $\delta_{3}\ll 1$, and results in the moduli space $M(K_{3,3}, l)$ containing four connected components. Finally, Example \ref{ex5}, below, illustrates that there exists a scenario where it is possible to choose a value for $\gamma$ which results in the moduli space $M(K_{3,3}, l)$ containing eight connected components. This completes the proof.  
\end{proof}

\begin{ex}\label{ex5} Suppose that $a=\epsilon \ll b=f=\alpha=\beta=1$ and $c=d=e=\sqrt{2}$ then the subset of $\mathbb{R}^{\geq 0}$ from which the value of $\gamma$ can be chosen so that $(K_{3,3},l)$ is realizable is the interval $[\sqrt{2}-\delta,\sqrt{2}+\delta]$ where $\delta\ll 1$. Observe that the moduli space $M(K_{3,3}, l)$ has eight connected components. Recall that  $M(K_{3,3}, l)$ is homeomorphic to $C_{v_{4},v_{1}}(K_{3,3}, l)$. The images of realizations $p,q,r$ and $s$ which are each contained in distinct connected components of $C_{v_{4},v_{1}}(K_{3,3}, l)$ are shown in Fig. \ref{K13f}. Again, the larger nodes labeled $^{\; p_{6}}_{\; p_{1}}$, $^{\; q_{6}}_{\; q_{1}}$, $^{\; r_{6}}_{\; r_{1}}$ and $^{\; s_{6}}_{\; s_{1}}$ are analogous to the larger node contained in Fig. \ref{K13a}. The reflection $\rho_{x}$ in the $x$-axis applied to the images of each of the realizations $p,q,r$ and $s$ results in the images of the four realizations $\rho p, \rho q, \rho r$ and $\rho s$ which are each contained in one of the remaining four distinct  connected components of $C_{v_{4},v_{1}}(K_{3,3}, l)$.   

\begin{figure}[H]
\begin{center}
\scalebox{1}{$\begin{xy}
\POS (12,0) *\cir<4pt>{} ="a" *+!L{^{\; p_{6}}_{\; p_{1}}} ,
 (24,0) *\cir<2pt>{} ="b" *+!L{^{ p_{2}}},
 (12,-12) *\cir<2pt>{} ="c" *+!L{^{ p_{3}}},
 (0,0) *\cir<2pt>{} ="d" *+!R{^{ p_{4}}},
 (12,12) *\cir<2pt>{} ="e" *+!L{^{ p_{5}}},
  (0,15)*+!{p},
 
\POS "a" \ar@{-}  "b",
\POS "a" \ar@{-}  "c",
\POS "a" \ar@{-}  "d",
\POS "a" \ar@{-}  "e",
\POS "b" \ar@{-}  "c",
\POS "c" \ar@{-}  "d",
\POS "d" \ar@{-}  "e",
\POS "b" \ar@{-}_{\gamma}  "e", 
\end{xy}$
$\begin{xy}
\POS (12,0) *\cir<4pt>{} ="a" *+!L{^{\; q_{6}}_{\; q_{1}}} ,
 (24,0) *\cir<2pt>{} ="b" *+!L{^{ q_{2}}},
 (12,12) *\cir<2pt>{} ="c" *+!L{_{ q_{3}}^{ q_{5}}},
 (0,0) *\cir<2pt>{} ="d" *+!R{^{ q_{4}}},
 (12,12) *\cir<2pt>{} ="e" ,
  (0,15)*+!{q},
 
\POS "a" \ar@{-}  "b",
\POS "a" \ar@{-}  "c",
\POS "a" \ar@{-}  "d",
\POS "a" \ar@{-}  "e",
\POS "b" \ar@{-}  "c",
\POS "c" \ar@{-}  "d",
\POS "d" \ar@{-}  "e", 
\POS "b" \ar@{-}_{\gamma}  "e",
\end{xy}$
$\begin{xy}
\POS (12,0) *\cir<4pt>{} ="a" *+!L{^{\; r_{6}}_{\; r_{1}}} ,
 (0,0) *\cir<2pt>{} ="b" *+!R{_{ r_{2}}^{ r_{4}}},
 (12,-12) *\cir<2pt>{} ="c" *+!L{^{ r_{3}}},
 (0,0) *\cir<2pt>{} ="d",
 (12,12) *\cir<2pt>{} ="e" *+!L{^{ r_{5}}},
  (0,15)*+!{r},
 
\POS "a" \ar@{-}  "b",
\POS "a" \ar@{-}  "c",
\POS "a" \ar@{-}  "d",
\POS "a" \ar@{-}  "e",
\POS "b" \ar@{-}  "c",
\POS "c" \ar@{-}  "d",
\POS "d" \ar@{-}  "e", 
\POS "b" \ar@{-}^{\gamma}  "e",
\end{xy}$
$\begin{xy}
\POS (12,0) *\cir<4pt>{} ="a" *+!L{^{\; s_{6}}_{\; s_{1}}} ,
 (0,0) *\cir<2pt>{} ="b" *+!R{_{ s_{2}}^{ s_{4}}},
 (12,12) *\cir<2pt>{} ="c" *+!L{_{ s_{3}}^{ s_{5}}},
 (0,0) *\cir<2pt>{} ="d" ,
  (12,12) *\cir<2pt>{} ="e",
  (0,15)*+!{s},
 
\POS "a" \ar@{-}  "b",
\POS "a" \ar@{-}  "c",
\POS "a" \ar@{-}  "d",
\POS "a" \ar@{-}  "e",
\POS "b" \ar@{-}  "c",
\POS "c" \ar@{-}  "d",
\POS "d" \ar@{-}  "e", 
\POS "b" \ar@{-}^{\gamma}  "e",
\end{xy}$}

\caption{$M(K_{3,3}, l)$ can have eight connected components}
\label{K13f}
\end{center}
\end{figure}
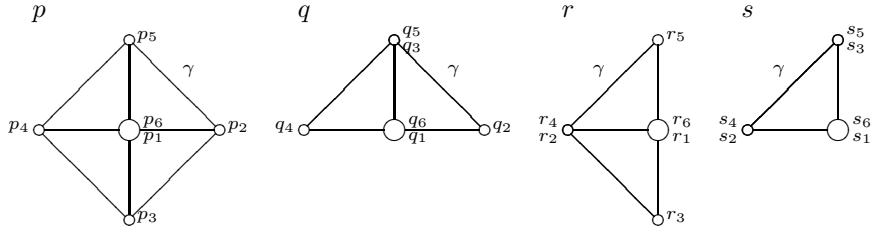
\end{ex}

\section{Moduli Spaces of Weighted Cyclic Subgraphs}
\hrule \vspace{0.5cm}

The moduli space of a weighted cycle is a well understood object, see for example \cite{F},\cite{Ha}, \cite{MT} and \cite{SV}. It may seem reasonable therefore  that whenever a weighted graph $(G,l)$ contains weighted cycles that by determining realizability and/or connectedness results for certain weighted cyclic subgraphs of $(G,l)$ then these results may be extended to realizability and/or connectedness results relating to the weighted graph $(G,l)$. This section contains two examples which show that properties of a moduli space $M(C,l_{\mid C})$ are not necessarily possessed by the moduli space of $M(G,l)$ where $C$ is a cyclic subgraph of $G$. 

\subsection{Realizability}
This section contains an example which shows that even though all weighted cyclic subgraphs of a given $(G,l)$ are realizable, the weighted graph $(G,l)$ may not itself be realizable.  

\begin{ex}\label{FourEx2}
Consider the graph $(G,l)$ where $V_{G}=\{v_{1},v_{2},v_{3},v_{4},v_{5},v_{6},$ $v_{7}\}$, $E_{G}=\{v_{1}v_{4},v_{1}v_{5},v_{1}v_{6},v_{2}v_{4},v_{2}v_{5}, v_{2}v_{7},v_{3}v_{4},v_{3}v_{6},v_{3}v_{7}\}$ and $l$ assigns the lengths $l(v_{1}v_{4})=2$, $l(v_{1}v_{5})=l(v_{1}v_{6})=4$, $l(v_{2}v_{4})=l(v_{3}v_{4})=\sqrt{13}$, $l(v_{2}v_{5})=l(v_{3}v_{6})=1$, $l(v_{3}v_{7})=\frac{7}{2}$ and $l(v_{2}v_{7})= \frac{1}{2}$. Observe that such a length assignment results in the situation where the weighted graph $(G,l)$ is not realizable and so $M(G,l)$ is empty. Further justification of the fact that $M(G,l)$ is empty can be found in Fig. \ref{Four8} which contains the weighted graph $(G,l)$ and two ``attempted realizations" of $(G,l)$ which are labeled $``p"$ and $``q"$. 

\begin{figure}[H]
\begin{center}
\scalebox{0.95}{$\begin{xy}
 \POS 
 (0,9) *\cir<2pt>{} ="a" ,
 (12,16) *\cir<2pt>{} ="b" ,
 (12,31) *\cir<2pt>{} ="c" ,
 (24,9) *\cir<2pt>{} ="d" ,
 (24,21) *\cir<2pt>{} ="e" ,
 (0,21) *\cir<2pt>{} ="f" ,
 (12,3) *\cir<2pt>{} ="g" ,
    (48,18) *\cir<2pt>{} ="a1" ,
 (60,18) *\cir<2pt>{} ="b1" ,
 (72,36) *\cir<2pt>{} ="c1" ,
 (72,0) *\cir<2pt>{} ="d1" ,
 (72,12) *\cir<2pt>{} ="d1ex" ,
 (72,15) *\cir<2pt>{} ="e1" ,
 (68,33) *\cir<2pt>{} ="f1" ,
 (68,3) *\cir<2pt>{} ="g1" ,
     (84,6) *\cir<2pt>{} ="a2" ,
 (96,6) *\cir<2pt>{} ="b2" ,
 (108,24) *\cir<2pt>{} ="c2" ,
 (96,30) *\cir<2pt>{} ="d2" ,
 (90,33) *\cir<2pt>{} ="d2ex" ,
 (90,36) *\cir<2pt>{} ="e2" ,
 (104,22) *\cir<2pt>{} ="f2" ,
 (99,27) *\cir<2pt>{} ="g2" ,
     (0,9) *+!UR{v_{2}} ,
 (12,16.5) *+!L{v_{4}} ,
 (12,31) *+!D{v_{1}} ,
 (24,9) *+!UL{v_{3}} ,
 (24,21) *+!DL{v_{6}} ,
 (0,21) *+!DR{v_{5}} ,
 (12,3) *+!U{v_{7}} ,
    (48,18) *+!U{p_{1}} ,
 (59,18) *+!U{p_{4}} ,
 (72.5,36.5) *+!L{p_{3}} ,
 (72.5,-1) *+!L{p_{2}} ,
 (72,11) *+!L{p_{2'}} ,
 (72,15) *+!L{p_{7}} ,
 (67,33) *+!D{p_{6}} ,
 (68,3) *+!U{p_{5}} ,
     (84,6) *+!U{q_{1}} ,
 (96,6) *+!U{q_{4}} ,
 (108,24) *+!L{q_{3}} ,
 (98.5,30) *+!D{q_{2}} ,
 (90,33) *+!R{q_{2'}} ,
 (90,36) *+!D{q_{7}} ,
 (104,22.5) *+!UL{q_{6}} ,
 (99,25.5) *+!L{q_{5}} ,
 (0,32) *+!{(G,l)} ,
 (48,36.5) *+!{``p"} ,
 (108,36.5) *+!{``q"} ,

  \POS "a" \ar@{-}|{\sqrt{13}} "b",
  \POS "b" \ar@{-}_{2} "c",
  \POS "b" \ar@{-}|{\sqrt{13}} "d",
  \POS "c" \ar@{-}^{4} "e",
  \POS "e" \ar@{-}^{1} "d",
  \POS "a" \ar@{-}_{\frac{1}{2}} "g",
  \POS "g" \ar@{-}_{\frac{7}{2}} "d",
  \POS "f" \ar@{-}^{4} "c",
  \POS "f" \ar@{-}_{1} "a",

  \POS "a1" \ar@{-} "b1",
  \POS "b1" \ar@{-} "c1",
  \POS "b1" \ar@{-} "d1",
  \POS "c1" \ar@{-} "e1",
  \POS "e1" \ar@{-} "d1ex",
  \POS "a1" \ar@{-} "g1",
  \POS "g1" \ar@{-} "d1",
  \POS "f1" \ar@{-} "c1",
  \POS "f1" \ar@{-} "a1",

  \POS "a2" \ar@{-} "b2",
  \POS "b2" \ar@{-} "c2",
  \POS "b2" \ar@{-} "d2",
  \POS "c2" \ar@{-} "e2",
  \POS "e2" \ar@{-} "d2ex",
  \POS "a2" \ar@{-} "g2",
  \POS "g2" \ar@{-} "d2",
  \POS "f2" \ar@{-} "c2",
  \POS "f2" \ar@{-} "a2",
  
  \POS "d1" \ar@{<.>}_{L} "d1ex",
  \POS "d2" \ar@{<.>}^{L} "d2ex",

 \end{xy}$}
\caption{The weighted graph $(G,l)$ and two ``attempted realizations" of $(G,l)$ which are labeled $``p"$ and $``q"$  }
\label{Four8}
\end{center}
\end{figure}
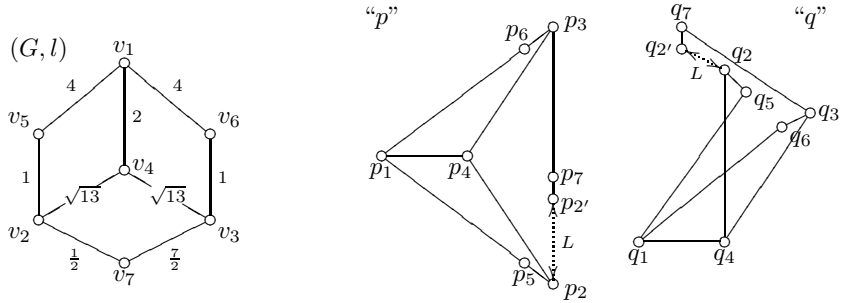

Observe that $G$ contains seven cyclic subgraphs and that 
all seven of these weighted cyclic subgraphs contained in $(G,l)$ are realizable. However, as the value of $L=d(p_{2},p_{2'})$ is always strictly positive in any ``attempted realization" of $(G,l)$, for example $``p"$ and $``q"$ in Fig. \ref{Four8}, then $(G,l)$ is not realizable.
\end{ex}

\subsection{Connectedness}

This section contains an example which shows that even though the moduli spaces of some weighted cyclic subgraphs of a given $(G,l)$ are not connected, the moduli space $M(G,l)$ may itself be connected.

\begin{ex}\label{3connex}

Consider the graph $(H,h)$ where $V_{H}=\{v_{1},v_{2},v_{3},v_{4},v_{5},v_{6},$ $v_{7}\}$, $E_{H}=\{v_{1}v_{4},v_{1}v_{5},v_{1}v_{6},v_{2}v_{4},v_{2}v_{5}, v_{2}v_{7},v_{3}v_{4},v_{3}v_{6},v_{3}v_{7}\}$ and $h$ assigns the lengths $h(v_{1}v_{4})=h(v_{2}v_{4})=h(v_{3}v_{4})=\sqrt{3}$ and  $h(v_{1}v_{5})=h(v_{1}v_{6})=h(v_{2}v_{5})=h(v_{2}v_{7})=h(v_{3}v_{6})=h(v_{3}v_{7})=\frac{3}{2}$ as illustrated in Fig. \ref{Five3}. \\

\begin{figure}[H]
\begin{center}
$\begin{xy}
 \POS (12,12) *\cir<2pt>{} ="a"   *+!L{v_{4}},
 (12,25) *\cir<2pt>{} ="b" *+!D{v_{1}},
 (23,17) *\cir<2pt>{} ="d" *+!L{v_{6}},
  (23,4) *\cir<2pt>{} ="f"  *+!UL{v_{3}} ,
 (12,-0.5) *\cir<2pt>{} ="g" *+!U{v_{7}},
  (1,4) *\cir<2pt>{} ="i" *+!UR{v_{2}},
 (1,17) *\cir<2pt>{} ="k" *+!R{v_{5}},
  (-5,28)  *+!{(H,h)} ,

\POS "a" \ar@{-}|{\sqrt{3}} "b",
\POS "a" \ar@{-}|{\sqrt{3}} "f",
\POS "a" \ar@{-}|{\sqrt{3}} "i",

\POS "d" \ar@{-}_{\frac{3}{2}}  "b",
\POS "d" \ar@{-}^{\frac{3}{2}}  "f",
\POS "g" \ar@{-}^{\frac{3}{2}}   "i",
\POS "g" \ar@{-}_{\frac{3}{2}}  "f",
\POS "k" \ar@{-}^{\frac{3}{2}}  "b",
\POS "k" \ar@{-}_{\frac{3}{2}}  "i",
\end{xy}$\hspace{1cm} \scalebox{0.9}{$\begin{xy}
 \POS (0,12) *\cir<2pt>{} ="v1"  *+!R{p_{1}},
 (18,12) *\cir<2pt>{} ="v2" *+!L{p_{4}} ,
 (28,28) *\cir<2pt>{} ="v3"  *+!L{p_{2}} ,
 (28,-4) *\cir<2pt>{} ="v4"  *+!L{p_{3}},
 (14,20) *\cir<2pt>{} ="v5" *+!D{p_{5}},
 (14,4) *\cir<2pt>{} ="v6" *+!U{p_{6}},
 (28,12) *\cir<2pt>{} ="v7" *+!L{p_{7}},
 (0,32) *+!{p}, 
(50,12) *\cir<2pt>{} ="vv1"  *+!R{q_{1}},
 (68,12) *\cir<2pt>{} ="vv2" *+!L{q_{4}} ,
 (58,-4) *\cir<2pt>{} ="vv3"  *+!L{q_{3}} ,
 (58,28) *\cir<2pt>{} ="vv4"  *+!L{q_{2}},
 (44,-2) *\cir<2pt>{} ="vv5" *+!R{q_{6}},
 (44,26) *\cir<2pt>{} ="vv6" *+!R{q_{5}},
 (58,12) *\cir<2pt>{} ="vv7" *+!DL{q_{7}},
 (38,32) *+!{q}, 
 
\POS "v2" \ar@{-} "v1",
\POS "v2" \ar@{-} "v3",
\POS "v2" \ar@{-} "v4",
\POS "v1" \ar@{-} "v5"
\POS "v5" \ar@{-} "v3"
\POS "v1" \ar@{-} "v6"
\POS "v6" \ar@{-} "v4"
\POS "v3" \ar@{-} "v7"
\POS "v7" \ar@{-} "v4"

\POS "vv2" \ar@{-} "vv1",
\POS "vv2" \ar@{-} "vv3",
\POS "vv2" \ar@{-} "vv4",
\POS "vv1" \ar@{-} "vv5"
\POS "vv5" \ar@{-} "vv3"
\POS "vv1" \ar@{-} "vv6"
\POS "vv6" \ar@{-} "vv4"
\POS "vv3" \ar@{-} "vv7"
\POS "vv7" \ar@{-} "vv4"

 \end{xy}$}

\caption{The weighted graph $(H,h)$ and the images of two realizations $p$ and $q$ of $(H,h)$ (the image of $\rho p$ is the reflection of the image of $p$ in the line containing $p_{1}$ and $p_{4}$)}
\label{Five3}
\end{center}
\end{figure}
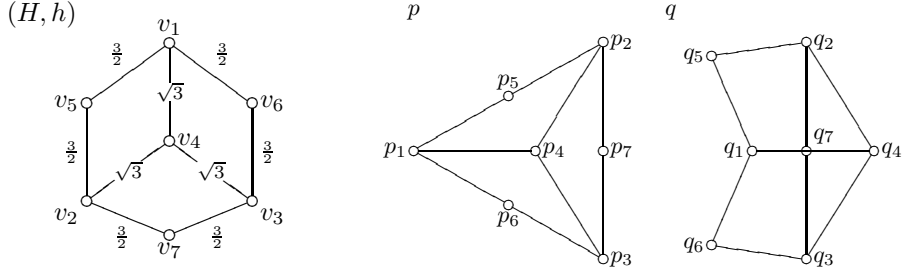
Note that there exists a realization $\rho p$ in the moduli space $M(H,h)$ whose image is a reflection of the image of $p$ in the half-line containing the images $p_{1}$ and $p_{4}$. Given $p,\rho p,q\in M(H,h)$, whose images are illustrated in Fig. \ref{Five3}, then there does not exist a path $\alpha_{1}:[0,1]\to M(H,h)$ such that $\alpha_{1}(0)=p$ and $\alpha_{1}(1)=\rho p$, a path $\alpha_{2}:[0,1]\to M(H,h)$ such that $\alpha_{2}(0)=p$ and $\alpha_{2}(1)=q$,or a path $\alpha_{3}:[0,1]\to M(H,h)$ such that $\alpha_{3}(0)=q$ and $\alpha_{3}(1)=\rho p$. However, observe that there does exist a path $\beta:[0,1]\to M(H,h)$ such that $\beta(0)=q$ and $\beta(1)=\rho q$, where $\rho q$ is the realization whose image is the reflection of the image of $q$ in the line containing $q_{1}$ and $q_{4}$. It follows that the moduli space $M(H,h)$ has three connected components.\\

Given $(H,h)$ as per Fig. \ref{Five3}, then consider the weighted graph $(G,l)$ where $G$ has vertex set $V_{G}=V_{H}\cup \{v_{8}\}$, edge set $E_{G}=E_{H}\cup \{v_{2}v_{8},v_{3}v_{8}\}$ and $l$ is an extension of $h$ which also assigns the lengths $l(v_{2}v_{8})=l(v_{3}v_{8})=\sqrt{2}$.\\

\begin{figure}[t]
\begin{center}
$\begin{xy}
 \POS (12,15) *\cir<2pt>{} ="a"   *+!L{v_{4}},
 (12,28) *\cir<2pt>{} ="b" *+!D{v_{1}},
 (23,17) *\cir<2pt>{} ="d" *+!L{v_{6}},
  (23,4) *\cir<2pt>{} ="f"  *+!UL{v_{3}} ,
 (12,-4) *\cir<2pt>{} ="g" *+!U{v_{7}},
  (1,4) *\cir<2pt>{} ="i" *+!UR{v_{2}},
 (1,17) *\cir<2pt>{} ="k" *+!R{v_{5}},
 (12,4) *\cir<2pt>{} ="j" *+!U{v_{8}},
  (-5,28)  *+!{(G,l)} ,

\POS "a" \ar@{-}|{\sqrt{3}} "b",
\POS "a" \ar@{-}|{\sqrt{3}} "f",
\POS "a" \ar@{-}|{\sqrt{3}} "i",
\POS "i" \ar@{-}|{\sqrt{2}} "j",
\POS "j" \ar@{-}|{\sqrt{2}} "f",

\POS "d" \ar@{-}_{\frac{3}{2}}  "b",
\POS "d" \ar@{-}^{\frac{3}{2}}  "f",
\POS "g" \ar@{-}^{\frac{3}{2}}   "i",
\POS "g" \ar@{-}_{\frac{3}{2}}  "f",
\POS "k" \ar@{-}^{\frac{3}{2}}  "b",
\POS "k" \ar@{-}_{\frac{3}{2}}  "i",
\end{xy}$\hspace{1cm} \scalebox{0.9}{$\begin{xy}
 \POS (0,12) *\cir<2pt>{} ="vv1" ,
 (1,9) *+!{r_{1}},
 (18,12) *\cir<2pt>{} ="vv2" *+!L{r_{4}} ,
 (4,0) *\cir<2pt>{} ="vv3"  *+!L{r_{3}} ,
 (4,24) *\cir<2pt>{} ="vv4"  *+!L{r_{2}},
 (-12,0) *\cir<2pt>{} ="vv5" *+!R{r_{6}},
 (-12,24) *\cir<2pt>{} ="vv6" *+!R{r_{5}},
 (-3,12) *\cir<2pt>{} ="vv7" *+!R{r_{7}},
 (4,12) *\cir<2pt>{} ="vv8" *+!UL{r_{8}},
 (-12,32) *+!{r}, 

\POS "vv2" \ar@{-} "vv1",
\POS "vv2" \ar@{-} "vv3",
\POS "vv2" \ar@{-} "vv4",
\POS "vv1" \ar@{-} "vv5"
\POS "vv5" \ar@{-} "vv3"
\POS "vv1" \ar@{-} "vv6"
\POS "vv6" \ar@{-} "vv4"
\POS "vv3" \ar@{-} "vv7"
\POS "vv7" \ar@{-} "vv4"
\POS "vv3" \ar@{-} "vv8"
\POS "vv4" \ar@{-} "vv8"

 \end{xy}$}

\caption{The weighted graph $(G,l)$ and the image of a realization $r$ of the weighted graph $(G,l)$}
\label{Five33}
\end{center}
\end{figure}
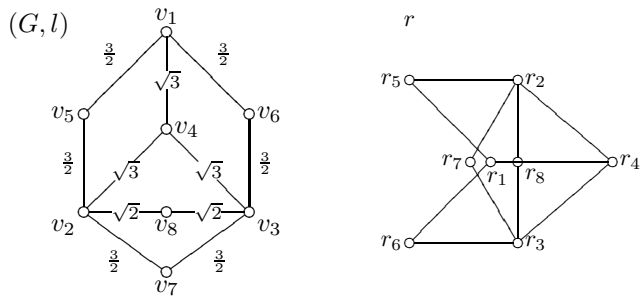

Consider now the inclusion map $\iota:M(G,l)\to M(H,h)$. Observe that $\iota$ is not surjective as neither $p$ nor $\rho p$, as per Fig. \ref{Five3}, are mapped onto by $\iota$. Note that the components of $M(H,l)$ which contain the realizations $p$ and $\rho p$ each contain just a single point. Note also that there exists a path $\gamma:[0,1]\to M(G,l)$ such that $\gamma(0)=r$ and $\gamma(1)=\rho r$ where $\rho r$ is the realization of $(G,l)$ whose image is the reflection of the image of $r$ in the line containing $r_{1}$ and $r_{4}$. It follows from these notes that $M(G,l)$ is connected.

\end{ex}


\subsection*{Acknowledgments}
The author would like to thank Dr. James Cruickshank who acted as Ph.D advisor during the writing of \cite{JML}, in which this note constitutes a chapter. 

\end{document}